\newcommand{\dist}{{\ensuremath{\mathrm{dist}}}}
\newcommand{\eps}{{\varepsilon}}
\newcommand{\bR}{{\mathbb R}}
\newcommand{\bZ}{{\mathbb Z}}
\newcommand{\bN}{{\mathbb N}}
\newcommand{\bE}{{\mathbb E}}
\newcommand{\cF}{{\mathcal F}}
\newcommand{\cH}{{\mathscr H}}
\newcommand{\cL}{{\mathcal L}}
\providecommand{\distt}[1]{\mbox{\dist}_{L^2(D)}(#1,M)}
\providecommand{\abs}[1]{\left\lvert#1\right\rvert}
\providecommand{\floor}[1]{\left\lfloor#1\right\rfloor}
\providecommand{\norm}[1]{\left\Vert#1\right\Vert}
\providecommand{\normt}[1]{\left\Vert#1\right\Vert_{L^2(D)}}
\providecommand{\normh}[1]{\left\Vert#1\right\Vert_{H^1(D)}}
\newtheorem{theorem}{Theorem}[section]
\newtheorem{proposition}[theorem]{Proposition}
\newtheorem{lemma}[theorem]{Lemma}
\begin{document}

\title{Low noise limit for the invariant measure of a multi-dimensional stochastic Allen-Cahn equation}

\author{Matthias Erbar}

\maketitle

\begin{abstract}
We study the invariant measure of a discretized stochastic Allen-Cahn equation in d+1 dimensions in the low noise limit. We consider a cuboidal domain and impose the two stable phases as boundary conditions at two opposite faces. We then take a joint limit where the temperature and the mesh of the discretization go to zero while the size of the domain grows. Our main result is that the invariant measures concentrate exponentially fast around the minimizers of the free energy functional if the domain does not grow too fast.
\end{abstract}

{\em Keywords:} Stochastic reaction-diffusion equation, Invariant measure, Large deviations

\section{Introduction and main result}\label{intro}
Reaction-diffusion equations are used in various contexts as a phenomenological model for the separation of phases and the evolution of phase boundaries. Additional effects not captured by the simplified reaction-diffusion model are often addressed by adding an extra noise term to the equation. In this paper we consider a stochastically perturbed Allen-Cahn equation
\begin{equation}\label{eqn_AC}
 \partial_t h~=~\Delta h - F'(h) +\sqrt{2\eps}\xi\ ,
\end{equation}
where $\xi$ is a space-time white noise and $\eps$ a small parameter. Here the reaction term involves a symmetric bistable potential $F$ with two wells of equal depth at the two stable phases $\pm1$. A typical choice would be $F(u)=\frac{1}{4}(u^2-1)^2$. The Allen-Cahn equation without noise is the gradient flow in $L^ 2$ 
of the so called Van-der-Waals free energy functional:
\begin{equation}\label{eqn_energy}
\cF(h)~=~\int\left[\frac{1}{2}\abs{\nabla h(x)}^2+F(h(x))\right]dx\ .
\end{equation}
We are interested in the invariant measure of (\ref{eqn_AC}) which is informally given as 
$$\mu^ \eps(dh)~=~Z^{-1}\exp\left(-\eps^{-1}\cF(h)\right)\prod\limits_{x}dh(x)$$
with a ``flat'' reference measure $\prod dh(x)$ on the space of all configurations. From a statistical mechanics viewpoint it describes the distribution of configurations at thermal equilibrium with $\eps$ corresponding to the temperature. We are interested in the low temperature limit $\eps\to 0$ where one expects the distribution to concentrate on the minimizers of the free energy $\cF$.

While (\ref{eqn_AC}) and its invariant measure are well understood in dimension one, the picture is less clear in dimensions 2 and higher. Note that (\ref{eqn_AC}) is ill-posed in the latter case since solutions to the stochastic heat equation then take values only in Sobolev spaces of negative order on which the nonlinear potential F is a priori not defined. Accordingly, also a random field with distribution $\mu^ \eps$ cannot be defined.

In this paper we focus on the $(d+1)$-dimensional case and consider (\ref{eqn_AC}) on a cuboidal domain $(-L,L)\times(0,1)^d$ which grows as $\eps$ tends to zero. We impose boundary conditions $\pm1$ on the left resp. right faces $\{\pm L\}\times(0,1)^d$ to force the appearance of an interface in the configurations. To overcome the aforementioned problem of ill-posedness we discretize the domain by a grid whose mesh size tends to zero jointly with $\eps$. We will show show that in the limit $\eps\to 0$ a discretized version of the measure $\mu^ \eps$ concentrates exponentially fast around the minimizers of the continuum free energy functional $\cF$ if the domain does not grow too fast.

Our interest in this result is twofold. Firstly, while for finite $\eps$ the continuum random field does not exist we obtain a well defined distribution on continuum configurations in the limit. This is possible since the ultra-violet divergences in the discrete model as the mesh size goes to zero are weaker than the effect of the decreasing temperature.
Secondly, we see that the effect of the decreasing temperature, favoring concentration around minimizers of the free energy, is stronger than the entropic effect originating from considering a moderately growing domain. This behavior is more interesting here than in the one-dimensional case, as the geometry of minimizing configurations is potentially richer.

We will now introduce the discretization and give a precise statement of our results.
Consider a cuboidal domain $D_L=(-L,L)\times(0,1)^d$ in $\bR^{d+1}$ and a corresponding lattice domain
$$D_{L,a}~=~\overline{D_{L-a}}\cap a\bZ^{d+1}$$
which has mesh size $a=1/n$ for some $n\in\bN$. We denote by $\cH^{L,a}$ the set of functions $h:\overline{D_L}\to \bR$ which are piecewise linear w.r.t. the grid $D_{L,a}$ and satisfy the boundary condition $h(\pm L,\cdot)=\pm1$. More precisely, for $\tilde{h}:D_{L,a}\to\bR$ and $(x,y)\in \overline{D_L}$ we set
$$P\tilde{h}(x,y)~=~\begin{cases}
                     \tilde{h}(x,y)\ , &\ (x,y)\in D_{L,a}\ ,\\
		     \pm1\ , &\ x\geq(\floor{\frac{L}{a}}+1)a\ \mbox{ resp. }\ x\leq-(\floor{\frac{L}{a}}+1)a\ ,\\
		     \mbox{linear interpolation}\ ,&\ \mbox{else.}
                    \end{cases}
$$

For the linear interpolation we fix a way to subdivide a cube of side length $a$ in the grid into simplices. Then the set of piecewise linear functions w.r.t. $D_{L,a}$ and $\pm1$ boundary conditions is defined as
$$\cH^{L,a}~=~\left\{h:\overline{D_L}\to \bR~:~h=P\tilde{h}\ \mbox{ for some }\ \tilde{h}:D_{L,a}\to\bR\right\}\ .$$
Note that $\cH^{L,a}$ can be identified with $\bR^N$ where $N(L,a)~=~(2\floor{L/a}-1)(1/a+1)^d\ .$
On $\cH^{L,a}$ we introduce the measure
$$\mu^{L,a,\eps}(dh)~=~\frac{1}{Z^{L,a,\eps}}\exp\left(-\frac{1}{\eps}\int\limits_{D_L}\frac{1}{2}\abs{\nabla h(z)}^2+F(h(z))dz \right)\cL^N(dh)\ ,$$
where $\cL^N$ denotes the Lebesgue measure on $\cH^{L,a}$ and $Z^{L,a,\eps}$ is a normalizing constant. Note that $\mu^{L,a,\eps}$ can be identified as the invariant measure of a suitable discretization of the Allen-Cahn equation (\ref{eqn_AC}) on the grid.

We assume that the potential satisfies the following properties. $F$ is $C^3$, symmetric and nonnegative such that $F(u)=0$ iff $u=\pm1$, $F'(u)=0$ iff $u=0,\pm1$ and $F''(0)<0,F''(\pm1)>0$.

We consider a joint limit where $\eps\to 0$ while the mesh size $a(\eps)$ tends to $0$ and the size $L(\eps)$ of the domain grows to $\infty$ as functions of $\eps$. We will show that the measures $\mu^{L,a,\eps}$ concentrate around minimizers of the free energy functional $\cF$ defined on functions on the domain $D=\bR\times(0,1)^d$ under the given boundary conditions $h(\pm\infty,y)=\pm1$.

Let us denote by $M$ the set of minimizers of $\cF$. We show that $M=\{m_\xi(x,y)=m(x-\xi)\ :\ \xi\in\bR\}$ for a characteristic transition profile $m:\bR\to[-1,1]$. Set $m_0+L^2(D)=\{h:D\to\bR\ \vert\ h-m_0\in L^2(D)\}$. By extension with $\pm1$ of functions in $\cH^{L,a}$ the measures $\mu^{L,a,\eps}$ can be regarded as measures on $m_0+L^2(D)$. For a function $h\in L^2(D)$ we will write $\distt{h}:=\inf\limits_{\xi\in\bR}\normt{h-m_\xi}$. Now our main result is the following:
\begin{theorem}\label{thm_mainthm}
 Let $L(\eps)\sim\eps^{-\lambda}$ and $a(\eps)\sim\eps^\alpha$ as $\eps\to0$ with $\lambda,\alpha>0$ such that $\lambda+(d+1)\alpha<1$. Then $\mu^\eps:=\mu^{L,a,\eps}$ concentrates around $M$ exponentially fast as $\eps\to0$. More precisely, there are constants $c_0,\delta_0>0$ such that for all $\delta<\delta_0$
$$\limsup\limits_{\eps\downarrow0}\eps\log\mu^\eps\{dist_{L^2(D)}(h,M)>\delta\}~\leq~-c_0\delta^2\ .$$
\end{theorem}

Originally the Allen-Cahn equation without noise was introduced in \cite{AC79} to model the separation of domains of different lattice structure in crystals and the dynamics of interface between these domains. It has been used as a phenomenological model in various contexts since and has also been derived from the microscopic dynamic of an underlying stochastic particle system in the hydrodynamic limit \cite{Bo87}.

In two and more dimensions solutions to the deterministic equation tend quickly towards configurations which are locally equal to one of the two stable phases with diffuse interfaces in between. On a slower scale these interfaces then undergo motion by mean curvature, see \cite{Il93}.

The stochastic dynamics have been studied in one dimension e.g in \cite{Fu95}. In this case the invariant measure is absolutely continuous with respect to a Brownian bridge connecting the stable phases, see \cite{RE05}. In higher dimensions the SPDE (\ref{eqn_AC}) can be given rigorous meaning in various ways replacing for example the white noise by a smoothened noise with spatial correlations (see e.g. \cite{KOR07}). For a different approach using a renormalization procedure for the potential $F$ see \cite{PT07}. In dimension 2 and the typical case where $F$ is given as a quartic polynomial the invariant measure of (\ref{eqn_AC}) is also referred to as the Euclidean quantum anharmonic oscillator or the $\Phi^ 4_2$-measure (see also \cite{Si74}).

The analogous result to Theorem \ref{thm_mainthm} in one dimension was proven in \cite{We10} in a continuous setting but with a similar discretization argument. A similar result was obtained in \cite{Be08} on an interval growing much more slowly where also convergence to a limiting measure on the minimizers could be established.

This work is based on \cite{We10} and largely employs the same techniques. While the choice of a growing prismlike domain and the particular boundary conditions still bears some one-dimensional traits Theorem \ref{thm_mainthm} can be seen as a first step in the investigation of the invariant measure in higher dimensions dealing with the problem of ill-posedness of the equation.

The rest of this paper is organized as follows. In the next section we analyze the free energy functional $F$ and derive estimates on the energy landscape in terms of tubular coordinates around the set of minimizers. In Section 3 we introduce several Gaussian measures which serve as reference measures for $\mu^{L,a,\eps}$. We give bounds and concentration properties to be used later in the proof. Finally in Section 4 we carry out the proof of Theorem \ref{thm_mainthm}.

We adopt the convention that $C$ denotes a generic constant whose value may change from line to line. Constants that appear several times will be numbered $c_1,c_2,$ etc.

\section{Analysis of the free energy functional}\label{sec:2}

In this section we will determine the  minimizers of the Allen-Cahn free energy under the prescribed boundary conditions and introduce tubular coordinates in a neighborhood of this set. Then we will derive estimates on the free energy landscape crucial for the rest of the paper in terms of these coordinates.\\
For a function $h$ on $D=\bR\times(0,1)^d$ satisfying the boundary condition $h(\pm\infty,y)=\pm1$ we define the free energy
\begin{equation}\label{eqn_freeenergy}
\cF(h)~=~\int\limits_D\frac{1}{2}\abs{\nabla h}^2+F(h)dz~-~C_*\ ,
\end{equation}
where $C_*$ is chosen such that the minimum of $\cF$ over functions with the right boundary values is zero.
Let us first determine the set of minimizers of this functional. By completing the square we obtain
\begin{align*}\label{eqn_completesquare}
 \int\limits_D\frac{1}{2}\abs{\nabla h}^2+F(h)dz~&=~\int\limits_\bR\int\limits_{(0,1)^d}\frac{1}{2}\abs{\nabla_y h}^2+\frac{1}{2}\left(\partial_x h-\sqrt{2F(h)}\right)^2 + \partial_x h \sqrt{2F(h)}dydx\\
 &\geq~\int\limits_{(0,1)^d}\int\limits_{\bR}\partial_x h \sqrt{2F(h)}dxdy~=~
\int\limits_{-1}^1\sqrt{2F(u)}du~:=~C_*\ .
\end{align*}
We have equality if and only if $\nabla_yh\equiv0$ and $\partial_xh=\sqrt{2F(h)}$. By our assumptions on $F$ the one-dimensional equation 
$$m'(x)-\sqrt{2F(m(x))}~=~0, \quad m(\pm\infty)~=~\pm1$$
has a unique solution $m$ with $m(0)=0$ and all other solutions are obtained via translation. We conclude that the set of minimizers of $\cF$ is given as
$$M~=~\left\{m_\xi(x,y)=m(x-\xi)\ \vert\ \xi\in \bR\right\}$$
and thus consists of the minimizers of the one-dimensional Ginsburg-Landau energy functional trivially extended to the $(d+1)$-dimensional domain. In the case of our example $F(u)=\frac{1}{4}(u^2-1)^2$ one finds $m=\tanh(\cdot/\sqrt{2})$.

From the assumptions on $F$ it follows that $m$ converges exponentially fast to $\pm1$, i.e. there exist positive constants $c_1$ and $c_2$ such that
\begin{equation}\label{eqn_tail}
 \begin{cases}
| 1 \mp m( \pm s)| \leq c_1  \exp(-c_2  s) \qquad & s \geq 0\\
|m'(\pm s)| \leq c_1 c_2  \exp(-c_2  s) \qquad & s \geq 0\\
|m''(\pm s)| \leq c_1 c_2^2  \exp(-c_2  s) \qquad & s \geq 0\ .
\end{cases}
\end{equation}
This shows that $m_\xi-m_{\xi'}\in L^2(D)$ for all $\xi,\xi'$ and so $m+L^2(D)$ is independent of the choice of a particular minimizer. Note that if $\distt{h}$ is small enough there exist a unique $\xi\in\bR$ such that $\distt{h}=\normt{h-m_\xi}$ and one has
\begin{equation}\label{eqn_normal}
\langle h-m_\xi, \partial_xm_\xi \rangle_{L^2(D)}~=~0\ .
\end{equation} 
For $h=m_\xi+v$ with $\langle v, \partial_xm_\xi \rangle_{L^2(D)}=0$ we will call the pair $(\xi,v)$ the tubular coordinates of $h$. We now state a proposition that characterizes the behavior of the energy functional close to the set of minimizers.
\begin{proposition}\label{prop_energylandscape}
 \begin{enumerate}
  \item[i)] There are constants $c_3,\delta_3>0$ such that for all $h\in m+H^1(D)$ with coordinates $h=m_\xi+v$ and $\normt{v}\leq\delta_3,\norm{v}_{L^\infty(D)}\leq 1$ we have that
$$\cF(h)~\leq~c_3\norm{v}^2_{H^1(D)}\ .$$
  \item[ii)] There are constants $c_0, \delta_0>0$ such that for all $\delta\leq\delta_0$ and $h:D\to\bR$ piecewise differentiable with $\distt{h}>\delta$ we have that
$$\cF(h)~>~c_0\delta^2\ .$$ 
 \end{enumerate}
\end{proposition}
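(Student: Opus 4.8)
For part (i) the plan is a second-order Taylor expansion of $\cF$ about the minimizer $m_\xi$. Writing $h=m_\xi+v$ and using $F\in C^3$ together with $\cF(m_\xi)=0$, I would expand
\begin{align*}
\cF(m_\xi+v)~=~&\int_D\Big[\nabla m_\xi\cdot\nabla v+F'(m_\xi)v\Big]\,dz\\
&+\int_D\Big[\tfrac12\abs{\nabla v}^2+\tfrac12 F''(m_\xi)v^2\Big]\,dz+\int_D R(m_\xi,v)\,dz.
\end{align*}
The first integral vanishes: integrating by parts and using that $m_\xi$ solves the Euler--Lagrange equation $\Delta m_\xi=F'(m_\xi)$ (with boundary contributions killed by $\partial_xm_\xi\to0$ at $x=\pm\infty$, by $\partial_{y}m_\xi\equiv0$ on the lateral faces, and by $v\in H^1(D)$), the bulk term is $\int_D(-\Delta m_\xi+F'(m_\xi))v\,dz=0$. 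Since $m_\xi$ takes values in $(-1,1)$, $F''(m_\xi)$ is bounded, so the quadratic term is at most $C\normh{v}^2$. For the cubic remainder I would use $F\in C^3$ to get $\abs{R}\le C\abs{v}^3$ on the compact range of values of $m_\xi+v$, and then the hypothesis $\norm{v}_{L^\infty(D)}\le1$ to bound $\int_D\abs{v}^3\le\normt{v}^2\le\normh{v}^2$. Collecting terms gives $\cF(h)\le c_3\normh{v}^2$. This part is routine; the $L^\infty$ bound enters precisely to tame the remainder.

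For part (ii) the difficulty is real, since $h$ is only piecewise differentiable and need not be close to $M$ nor bounded (one may assume $\cF(h)<\infty$, else the claim is trivial). I would \emph{not} argue by Taylor expansion: in dimension $d+1\ge3$ the Sobolev embedding only controls $\int\abs v^3$ by $\normt v^{3/2}$, which dominates the quadratic term one wants to keep, so the naive expansion fails. Instead I would use the exact completed-square identity behind the computation of $C_*$ above,
\begin{equation*}
\cF(h)~=~\int_D\tfrac12\abs{\nabla_y h}^2\,dz~+~\int_{(0,1)^d}\Big[\int_\bR\tfrac12\big(\partial_x h-\sqrt{2F(h)}\big)^2\,dx\Big]\,dy~=:~\cE_y(h)+\cE_x(h),
\end{equation*}
which is nonnegative and vanishes exactly on $M$; the inner bracket is the one-dimensional excess energy $\cF_1(h(\cdot,y))$ of the slice $h(\cdot,y)$.

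The first ingredient would be a one-dimensional coercivity lemma (as in \cite{We10}): there are $c,\delta_1>0$ with $\cF_1(g)\ge c\min\{\disto{g}^2,\delta_1^2\}$ for every admissible slice $g$, the point being that small slice-energy forces closeness to a translate of $m$ while far-away slices contribute a fixed positive amount. The second step combines the slices via a dichotomy. Let $d(y)=\disto{h(\cdot,y)}$, and where $d(y)\le\delta_1$ let $\xi(y)$ be the optimal slice translation. If the bad set $A=\{y:d(y)>\delta_1\}$ has measure at least a fixed $\beta>0$, then $\cE_x(h)\ge c\delta_1^2\beta$, a fixed positive constant, and choosing $\delta_0$ small so that $c_0\delta_0^2$ lies below it settles this case. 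Otherwise almost every slice is close to $M_1$, $\xi(y)$ is defined on most of the cube, and the linearization $\partial_{y_i}h\approx-(\partial_{y_i}\xi)\,m'(\cdot-\xi)$ lets me bound $\int\abs{\nabla_y\xi}^2$ by $C\,\cE_y(h)$ up to lower-order terms. A Poincar\'e inequality on $(0,1)^d$ then replaces $\xi(y)$ by its mean $\bar\xi$ at the cost of $\cE_y(h)$, and testing the distance with the single profile $m_{\bar\xi}$ yields
\begin{equation*}
\distt{h}^2~\le~\int_{(0,1)^d}\normo{h(\cdot,y)-m_{\bar\xi}}^2\,dy~\le~C\big(\cE_x(h)+\cE_y(h)\big)~=~C\,\cF(h),
\end{equation*}
so that $\distt{h}>\delta$ forces $\cF(h)\ge c_0\delta^2$.

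The main obstacle is exactly this second step. Conceptually, the translation invariance in $x$ produces a zero mode: $\partial_xm_\xi$ spans the kernel of the linearized operator $-\Delta+F''(m_\xi)$, which separates variables and is positive on the orthogonal complement thanks to the Neumann conditions in $y$ (whose spectral gap is the Poincar\'e constant I use above). The genuinely $(d+1)$-dimensional phenomenon is that the slices may each sit near $M$ with a $y$-dependent shift $\xi(y)$ while $h$ is globally far from $M$; this is precisely what the transverse energy $\cE_y$ must penalize. Making the estimate $\int\abs{\nabla_y\xi}^2\le C\,\cE_y(h)$ uniform in $h$, and handling the geometry of the bad-slice set $A$ when invoking Poincar\'e, is where I expect the bulk of the technical work to lie.
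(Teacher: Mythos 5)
Part (i) of your proposal is correct and is essentially identical to the paper's proof: expand $\cF$ around $m_\xi$, kill the linear term using the Euler--Lagrange equation $\Delta m_\xi=F'(m_\xi)$, bound the quadratic term by $\sup_{[-1,1]}\abs{F''}\,\normh{v}^2$, and control the third-order Taylor remainder by $\sup\abs{F'''}\,\norm{v}_{L^\infty}\normt{v}^2$, which is exactly where the hypothesis $\norm{v}_{L^\infty(D)}\leq1$ enters.

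Part (ii) is where you genuinely diverge from the paper, and your Case B contains a gap that is not merely technical. The decomposition $\cF(h)=\cE_y(h)+\int_{(0,1)^d}\cF_0(h(\cdot,y))\,dy$ is correct, the one-dimensional input exists (Proposition \ref{prop_energylandscape1D}), and Case A is fine. But in Case B you invoke a Poincar\'e inequality for $\xi$ on the good set $(0,1)^d\setminus A$, and this is false when only the measure of $A$ is controlled: $A$ can disconnect the cube. Concretely, take $d=1$, let $A$ be a slab of width $2\eta<\beta$ around $y=1/2$, and let $h(\cdot,y)=m_0$ below the slab and $h(\cdot,y)=m_R$ above it, with (say) linear interpolation in between and $R$ arbitrarily large. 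The slab slices are far from the one-dimensional minimizers, so $A$ is exactly the slab; on the good set $\nabla_y\xi\equiv0$ while $\xi$ jumps by $R$, so the chain ``$\int\abs{\nabla_y\xi}^2\leq C\cE_y(h)$, then Poincar\'e, then test against $m_{\bar\xi}$'' yields no lower bound on $\cF(h)$ at all, even though $\distt{h}^2\sim R$. The energy that actually forces $\cF(h)$ to be large here is the transverse energy of $h$ \emph{itself} across the slab (crossing from $m_0$ to $m_R$ over width $2\eta$ costs at least $\norm{m_0-m_R}_{L^2}^2/(4\eta)\sim R/\eta$), but your argument never sees this term, because you only differentiate $\xi$, and only on the good set. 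The same example breaks your final display a second time: on $A$ the one-dimensional coercivity gives only the capped bound $\cF_0\geq c\delta_1^2$, not $\cF_0\geq c\,\dist_{L^2(\bR)}(h(\cdot,y),M_0)^2$ (which is false in one dimension for large distances --- a three-interface profile with transitions at $0$, $R$, $2R$ has $\cF_0\approx2C_*$ but squared distance $\gtrsim R$), so $\int_A\norm{h(\cdot,y)-m_{\bar\xi}}_{L^2(\bR)}^2dy$ is uncontrolled even when $\abs{A}$ is tiny. This three-interface example also shows your target inequality $\distt{h}^2\leq C\cF(h)$ is false globally, so Case B additionally needs a dichotomy on the oscillation of $\xi$; that part is patchable, but the bad-set issues are not, without a new idea.

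The missing mechanism --- converting smallness of the transverse derivative of $h$ into control of slice distances, uniformly, bad set included --- is precisely what the paper's Lemma \ref{lem_distL2} supplies, and it lets the paper dispense with the coordinates $\xi(y)$ altogether. The paper slices off one transverse coordinate at a time and inducts on $d$: if $\dist_{L^2(D^{n+1})}(h,M_{n+1})>\delta$ and $\cF_{n+1}(h)\leq\delta^2/8$, then $\norm{\partial_t h}_{L^2}\leq\sqrt{2\cF_{n+1}(h)}\leq\delta/2$, so by Lemma \ref{lem_distL2} \emph{every} slice $h_t$ satisfies $\dist_{L^2(D^n)}(h_t,M_n)\geq\delta/2$ --- there is no bad set to handle --- and the induction hypothesis gives $\cF_{n+1}(h)\geq\int_0^1\cF_n(h_t)\,dt>c\delta^2/4$. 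If you want to complete your slicing-in-$x$ approach, you will need an analogue of this triangle-type inequality to pay for crossings of the bad set; at that point the two proofs essentially merge.
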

The differentiability condition on $h$ is no severe restriction as the proposition will later only be applied to piecewise linear functions.
\begin{proof}[Proof of i] Let $\delta_3$ be such that tubular coordinates are defined for all $h$ satisfying \\$\distt{h}<\delta_3$ and let $h=m_\xi+v$ with $\normt{v}\leq\delta_3,\norm{v}_{L^\infty(D)}\leq 1$.
Using $\Delta m_\xi=F'(m_\xi)$ we write:
\begin{equation*}
 \begin{split}
  \cF(h)~&=~\int\frac{1}{2}\abs{\nabla m_\xi+\nabla v}^2 + F(m_\xi +v)dz -\int\frac{1}{2}\abs{\nabla m_\xi}^2 + F(m_\xi)dz\\
         &=~\int\frac{1}{2}\abs{\nabla v}^2 + \frac{1}{2}F''(m_\xi)v^2dz\\
         &~~+ \int F(m_\xi+v)-F(m_\xi)-F'(m_\xi)v-\frac{1}{2}F''(m_\xi)v^2dz~.
 \end{split}
\end{equation*}
Using Taylor expansion of $F$ around $m_\xi$ in the second integral we can estimate:
\begin{equation*}
 \begin{split}
  \cF(h)~&\leq~\frac{1}{2}\sup\limits_{u\in[-1,1]}\abs{F''(u)}\cdot\normh{v}^2 + \sup\limits_{u\in[-2,2]}\abs{F'''(u)}\cdot\norm{v}_{L^\infty(D)}\cdot\normt{v}^2\\
         &\leq~c_3\normh{v}^2\ .
 \end{split}
\end{equation*}
\end{proof}

 The proof of ii) is based on a corresponding one-dimensional result from \cite{We10} (similar estimates were already obtained in \cite{Fu95}, \cite{OR07}) and an induction argument on the dimension $d$. As the dimension $d$ is implicit when writing $\cF$ or $D$. So let us introduce the following notation for the moment to explicitly keep track of the dimension. Let $D^n=\bR\times(0,1)^n$ and define for $u:D^n\to\bR$ the $(n+1)$-dimensional energy functional
$\cF_n(u)=\int_{D^n}\frac{1}{2}\abs{\nabla u}^2+F(u)dz-C_*$. Under the boundary condition $u(\pm\infty,\cdot)=\pm1$ the set of minimizers of $\cF_n$ is given by $M_n=\{m_\xi(x,y)=m(x-\xi)\ \vert \ \xi\in\bR\}$. For $h:D^{n+1}\to\bR$ we write $h_t$ for the function $w\mapsto h(w,t)$ with $w\in D^n,\ t\in(0,1)$. We have the following
\begin{proposition}{\cite[Prop. 2.2]{We10}}\label{prop_energylandscape1D}
There are constants $c_4, \delta_4>0$ such that for all $\delta\leq\delta_4$ and $u:\bR\to\bR$ with $\dist_{H^1(\bR)}(u,M_0)>\delta$ we have
$$\cF_0(h)~>~c_4\delta^2\ .$$ 
\end{proposition}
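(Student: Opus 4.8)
The plan is to prove the equivalent lower bound $\cF_0(u)\ge c_4\min\{\dist_{H^1(\bR)}(u,M_0)^2,\delta_4^2\}$ and to split the analysis into a local regime near $M_0$ and a global regime away from it. As a preliminary I would record the one–dimensional Bogomolny identity: completing the square exactly as in the displayed computation above, but now in one dimension and using that $\int_\bR u'\sqrt{2F(u)}\,dx=\int_{-1}^1\sqrt{2F}=C_*$ is a pure boundary term, independent of the path taken between the values $\pm1$, one obtains for every $u$ with $u(\pm\infty)=\pm1$ the exact formula
\[
 \cF_0(u)=\frac12\int_\bR\bigl(u'-\sqrt{2F(u)}\bigr)^2\,dx .
\]
In particular $\cF_0\ge0$ with equality precisely on $M_0$, which re-proves that $M_0$ is exactly the zero set and gives a convenient handle on the tails for the global estimate.

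In the local regime I would, for $u$ with $\dist_{L^2(\bR)}(u,M_0)$ small, pass to tubular coordinates $u=m_\xi+v$ with $\langle v,\partial_x m_\xi\rangle_{L^2}=0$ and Taylor-expand as in part i), obtaining $\cF_0(u)=\tfrac12 Q(v)+R(v)$ with quadratic form $Q(v)=\int_\bR |v'|^2+F''(m_\xi)v^2\,dx$ and remainder $|R(v)|\le C\norm{v}_{L^\infty(\bR)}\norm{v}_{L^2(\bR)}^2\le C\norm{v}_{H^1(\bR)}^3$, using $H^1(\bR)\hookrightarrow L^\infty(\bR)$. The heart of the matter is coercivity of $Q$. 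Differentiating the profile equation $-m''+F'(m)=0$ shows that $\partial_x m_\xi$ is a zero mode of $\cL=-\partial_x^2+F''(m_\xi)$; since $\partial_x m_\xi>0$ it is nodeless, hence the ground state, so $0=\min\mathrm{spec}(\cL)$, while $F''(m_\xi(x))\to F''(\pm1)>0$ at $\pm\infty$ pushes the essential spectrum into $[F''(1),\infty)$ and leaves a spectral gap $\nu>0$. Thus $Q(v)\ge\nu\norm{v}_{L^2(\bR)}^2$ on the $L^2$-orthogonal complement of $\partial_x m_\xi$, and interpolating this with $Q(v)\ge\norm{v'}_{L^2(\bR)}^2-\norm{F''}_\infty\norm{v}_{L^2(\bR)}^2$ upgrades it to $Q(v)\ge c'\norm{v}_{H^1(\bR)}^2$. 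For $\norm{v}_{H^1(\bR)}$ below a threshold the cubic remainder is absorbed into a fraction of the quadratic term, and since $\dist_{H^1(\bR)}(u,M_0)\le\norm{v}_{H^1(\bR)}$ one concludes $\cF_0(u)\ge\tfrac{c'}{4}\norm{v}_{H^1(\bR)}^2\ge\tfrac{c'}{4}\dist_{H^1(\bR)}(u,M_0)^2$.

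It remains to produce $\eta_0>0$ with $\cF_0(u)>\eta_0$ whenever $\dist_{H^1(\bR)}(u,M_0)>\delta_4$. I would argue by contradiction: a sequence $u_n$ with $\cF_0(u_n)\to0$ has $\int|u_n'|^2$ and $\int F(u_n)$ bounded, so the set where $u_n$ stays away from $\pm1$ has bounded measure and, together with the derivative bound, the transition happens inside a bounded window; recentering at a fixed level crossing and using lower semicontinuity, a subsequence converges to a minimizer $m_{\xi_0}\in M_0$, strongly enough in $H^1(\bR)$ — the tails being controlled by the exponential/hyperbolic structure of $F$ near $\pm1$, as encoded in the identity above — to contradict $\dist_{H^1(\bR)}(u_n,M_0)>\delta_4$. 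Combining the two regimes, after checking that for small distances the $L^2$- and $H^1$-nearest points of $M_0$ are comparable (so that the $L^2$-tubular coordinate used above is genuinely $H^1$-small), yields the bound with suitable $c_4,\delta_4$, from which the stated estimate $\cF_0(u)>c_4\delta^2$ for $\dist_{H^1(\bR)}(u,M_0)>\delta$ follows.

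The main obstacle is the global lower bound. Because the line is noncompact and $\cF_0$ is translation invariant, minimizing sequences can run off to infinity or a priori split into several interfaces, so the compactness step requires a genuine recentering/concentration argument rather than a naive weak limit, and upgrading the convergence from local to global $H^1$ — the only way to contradict the distance bound — is the delicate point where the exponential approach to $\pm1$ must be exploited quantitatively. A secondary technical nuisance, which I would dispatch with the comparability just mentioned, is reconciling the $L^2$ orthogonality built into the tubular coordinates with the $H^1$ distance appearing in the statement.
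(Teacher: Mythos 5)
The first thing to note is that the paper contains no proof of this proposition to compare against: it is imported verbatim from \cite[Prop.~2.2]{We10} and used as a black box in the induction that proves Proposition \ref{prop_energylandscape}~ii). So your proposal has to be judged as a standalone argument, and as such its architecture is sound and is the standard route to interface-stability estimates of this kind. The Bogomolny identity $\cF_0(u)=\tfrac12\int_\bR\bigl(u'-\sqrt{2F(u)}\bigr)^2dx$ is exact for finite-energy $u$ with $u(\pm\infty)=\pm1$, since the cross term $\int_\bR u'\sqrt{2F(u)}\,dx$ telescopes to $C_*$ irrespective of monotonicity. The local regime is solid: $\partial_x m_\xi$ is a positive zero mode of the linearization $-\partial_x^2+F''(m_\xi)$, hence its ground state, the essential spectrum starts at $F''(1)>0$, so the quadratic form is coercive on $\{v:\langle v,\partial_x m_\xi\rangle_{L^2}=0\}$; the interpolation upgrading $L^2$- to $H^1$-coercivity and the absorption of the cubic Taylor remainder for $\norm{v}_{H^1}$ small are both correct (with $\norm{F''}_\infty$ read as the supremum over the range of $m$). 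Your reformulation $\cF_0(u)\geq c_4\min\{\dist_{H^1(\bR)}(u,M_0)^2,\delta_4^2\}$ and the $L^2$-versus-$H^1$ comparability of nearest points are also fine.

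The only place where real work remains is the one you yourself flag: the global regime. As written, ``a subsequence converges to a minimizer \dots strongly enough in $H^1(\bR)$'' is the conclusion you need rather than an argument, and it is the crux; but it is completable along exactly your lines once the mechanism is named. Set $G(s)=\int_{-1}^s\sqrt{2F}$. The net change of $G(u_n)$ over $\bR$ is $C_*$, while its total variation is at most $\int_\bR\tfrac12(u_n')^2+F(u_n)\,dx=C_*+o_n(1)$; hence backtracking, overshoot past $\pm1$, and additional interfaces all cost quantifiable excess energy and are excluded. After recentering at a zero of $u_n$, weak lower semicontinuity of $\int(u'-\sqrt{2F(u)})^2$ on compact intervals identifies any local limit as the solution of $u'=\sqrt{2F(u)}$, $u(0)=0$, i.e.\ $m$ itself. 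Finally, the energy on $[-R,R]$ is at least $G(u_n(R))-G(u_n(-R))=C_*-o_R(1)-o_n(1)$, so the tail energy is $o_R(1)+o_n(1)$; combined with the nondegeneracy $F(s)\geq c(s\mp1)^2$ near $\pm1$ (it is this hyperbolicity, rather than exponential decay, that does the work for $u_n$, while exponential decay handles the tails of $m$) this converts into smallness of $\norm{u_n-m}_{H^1(\bR)}$ outside $[-R,R]$, closing the contradiction. With that accounting made explicit your proof is complete and correct; it is by necessity a different route from the paper, which simply cites the result, and it has the virtue of making the one-dimensional input of Section \ref{sec:2} self-contained.
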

\begin{proof}[Proof of Proposition \ref{prop_energylandscape} ii]
We argue by induction on $d$. For $d=0$ the assertion is true by Proposition \ref{prop_energylandscape1D}. So assume that the assertion is true for $d=n$ with a constant $c$ and let us proof it for $d=n+1$. The idea is as follows:
We want to show that a function $h:D^n\times(0,1)\to\bR$ cannot have arbitrarily small energy while keeping a distance $\delta$ to the set $M_{n+1}$. Suppose there was such a function. As changes of $h$ in direction of the last coordinate cost energy, Lemma \ref{lem_distL2} below shows that the bound on the energy implies that the functions $h_t=h(\cdot,t)$ have to keep a certain distance uniform in $t$ from the minimizers $M_{n}$. Then we can invoke the $n$-dimensional energy estimate from the induction hypothesis to conclude the proof.

So let $\delta_4$ be the constant from Proposition \ref{prop_energylandscape1D} and set $\delta_0=\delta_4$. Let $\delta\leq\delta_0$ and \\$\dist_{L^2(D^{n+1})}(h,M_{n+1})>\delta$. We further assume that $\cF_{n+1}(h)\leq\frac{1}{8}\delta^2$. From Lemma \ref{lem_distL2} we deduce that for all $t\in(0,1)$:
$$\dist_{L^2(D^{n})}(h_t,M_n)~\geq~\delta -\norm{\partial_t h}_{L^2(D^{n+1})}~\geq~\delta-\sqrt{2\cF_{n+1}(h)}~\geq~\frac{1}{2}\delta\ .$$
By the induction hypothesis we have $\cF_n(h_t)> c\frac{1}{4}\delta^2$ for all $t\in(0,1)$ which immediately implies that
$$\cF_{n+1}(h)~\geq~\int\limits_0^1\cF_n(h_t)dt~>~c\frac{1}{4}\delta^2\ .$$
Choosing $c_0=\min\{c\frac{1}{4},\frac{1}{8}\}$ we finish the proof.
\end{proof}

\begin{lemma}\label{lem_distL2}
Let $h:D^{n+1}\to\bR$ be piecewise differentiable. 
Then we have~:
$$\dist_{L^2(D^{n+1})}(h,M_{n+1})~\leq~\min\limits_{t\in(0,1)}\dist_{L^2(D^{n})}(h_t,M_n)+\norm{\partial_th}_{L^2(D^{n+1})}\ .$$
\end{lemma}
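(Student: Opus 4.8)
The plan is to exploit the fact that the two families of minimizers are compatible across dimensions: a minimizer $m_\xi\in M_n$, viewed as a function on $D^{n+1}$ that is constant in the last coordinate $t$, is precisely the minimizer $m_\xi\in M_{n+1}$. This lets me promote a good approximation of a single slice $h_{t^*}$ by some $m_{\xi^*}$ into a global competitor for the distance on $D^{n+1}$. Concretely, I would first choose $t^*\in(0,1)$ attaining $\min_{t}\dist_{L^2(D^n)}(h_t,M_n)$ and let $\xi^*\in\bR$ be such that $\dist_{L^2(D^n)}(h_{t^*},M_n)=\norm{h_{t^*}-m_{\xi^*}}_{L^2(D^n)}$. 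Since $m_{\xi^*}\in M_{n+1}$ (now read as a function on $D^{n+1}$, independent of $t$), we have $\dist_{L^2(D^{n+1})}(h,M_{n+1})\le\norm{h-m_{\xi^*}}_{L^2(D^{n+1})}$, and it suffices to estimate the right-hand side.

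Next I would split $h-m_{\xi^*}=(h_{t^*}-m_{\xi^*})+(h-h_{t^*})$ on $D^{n+1}$ and apply the triangle inequality in $L^2(D^{n+1})$. The first summand is constant in $t$, so its $L^2(D^{n+1})$-norm equals $\norm{h_{t^*}-m_{\xi^*}}_{L^2(D^n)}=\min_t\dist_{L^2(D^n)}(h_t,M_n)$, which is exactly the first term of the claimed bound. It then remains to prove $\norm{h-h_{t^*}}_{L^2(D^{n+1})}\le\norm{\partial_t h}_{L^2(D^{n+1})}$.

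For this I write, for $w\in D^n$ and $t\in(0,1)$, $h(w,t)-h(w,t^*)=\int_{t^*}^t\partial_s h(w,s)\,ds$ and apply Cauchy--Schwarz, using $\abs{t-t^*}\le1$ and $[t^*\wedge t,\,t^*\vee t]\subseteq(0,1)$, to obtain $\abs{h(w,t)-h(w,t^*)}^2\le\int_0^1\abs{\partial_s h(w,s)}^2\,ds$. Integrating over $w\in D^n$ and then over $t\in(0,1)$ (the right-hand side being independent of $t$) yields $\norm{h-h_{t^*}}^2_{L^2(D^{n+1})}\le\norm{\partial_t h}^2_{L^2(D^{n+1})}$. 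Combining this with the first term completes the estimate.

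The only genuinely delicate point is the existence of a minimizing $t^*$ over the \emph{open} interval $(0,1)$. I would address it by noting that $t\mapsto\dist_{L^2(D^n)}(h_t,M_n)$ is continuous, since the distance-to-a-set is $1$-Lipschitz in its argument and $t\mapsto h_t$ is $L^2(D^n)$-continuous because $\partial_t h\in L^2$; if the infimum is approached only at an endpoint one simply carries out the entire argument for a near-minimizing $t^*$ and lets the gap tend to zero, recovering the same bound with $\min$ replaced by $\inf$. Apart from this technicality the argument is elementary, and the real content is structural: the compatibility of $M_n$ and $M_{n+1}$ is what allows a single $\xi^*$ to serve in both dimensions, while the unit length of the extra interval is precisely what makes the constant in front of $\norm{\partial_t h}$ equal to $1$.
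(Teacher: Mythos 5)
Your proof is correct and follows essentially the same route as the paper: pick a slice $t$, take the optimal $\xi$ for that slice, use $m_\xi$ (constant in $t$) as the competitor in $D^{n+1}$, and bound the slice-variation term via the fundamental theorem of calculus together with Jensen/Cauchy--Schwarz. The only cosmetic difference is that the paper proves the bound for an \emph{arbitrary} $t_0\in(0,1)$ and then takes the minimum over $t_0$, which sidesteps the attainment-of-the-minimum technicality you handle by a limiting argument.
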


\begin{proof}
Let $t_0\in(0,1)$ and let $\xi$ be such that $$\norm{h_{t_0}-m_\xi}_{L^2(D^{n})}~=~\dist_{L^2(D^{n})}(h_{t_0},M_n)\ .$$ 
Then we can estimate:
\begin{align*}
\dist_{L^2(D^{n+1})}(h,M_{n+1})~\leq&~\norm{h-m_\xi}_{L^2(D^{n+1})}~\\
\leq&~\left(\int\limits_0^1\left(\norm{h_t-h_{t_0}}_{L^2(D^{n})}+\norm{h_{t_0}-m_\xi}_{L^2(D^{n})}\right)^2dt\right)^{\frac{1}{2}}\\
\leq&~\left(\int\limits_0^1\norm{h_t-h_{t_0}}_{L^2(D^{n})}^2dt\right)^{\frac{1}{2}}+\dist_{L^2(D^{n})}(h_{t_0},M_n)\ .
\end{align*}
To finish the proof we note that by Jensens's inequality
\begin{align*}
 \norm{h_t-h_{t_0}}^2_{L^2(D^{n})}~=~\int\limits_{D^n}\left(\int\limits_{t_0}^t \partial_sh(w,s)ds\right)^2dw
~\leq~\norm{\partial_th}^2_{L^2(D^{n+1})}\ .
\end{align*}
\end{proof}

We finish this section with an approximation of the minimizers $m_\xi$ by functions in $\cH^{L,a}$, where we assume that $L(\eps)\sim\eps^{-\lambda}$ and $a(\eps)\sim\eps^{\alpha}$  as $\eps\to0$ with $\lambda,\alpha>0$. Fix a $\lambda_1$ satisfying
\begin{equation}\label{eqn_gamma1}
 0~<\lambda_1~<~\min(2\alpha,\lambda)
\end{equation}
and define the function $m_\xi^\eps$ as follows.
First consider a smooth function $\tilde{m}^\eps:\bR\to\bR$ which coincides with $m$ on $[-\eps^{-\lambda_1},\eps^{-\lambda_1}]$ and which satisfies $\tilde{m}^\eps=\pm1$ on $[\eps^{-\lambda_1}+1,\infty)$ respectively $(-\infty,-\eps^{-\lambda_1}-1]$. We further assume that $m\leq\tilde{m}^\eps$ respectively $m\geq\tilde{m}^\eps$ on the sets $[\eps^{-\lambda_1},\eps^{-\lambda_1}+1)$ and $(-\eps^{-\lambda_1}-1,-\eps^{-\lambda_1}]$. We also assume that $|\frac{d}{dx}\tilde{m}^\varepsilon| \leq 2 c_1 c_2 e^{-c_2 \varepsilon^{-\lambda_1}}$ on both of these intermediate intervals. Then we set
\begin{equation*}
 m_\xi^\eps(x,y)~=~\begin{cases} \tilde{m}^\eps(x-\xi) & \mbox{if } (x,y)\in D_{L,a}\ ,\\
                                 \pm1 & \mbox{if } x\geq\eps^{-\lambda}\ (\mbox{resp. }x\leq\-\eps^{-\lambda})\ ,\\
                                 \mbox{linear interpolation} & \mbox{else~.}
\end{cases}
\end{equation*}
Then we have the following error bounds.

\begin{lemma}
 \label{lem_discbo1}
For $\varepsilon$ small enough and $\xi \in [-\varepsilon^{-\lambda}+\varepsilon^{-\lambda_1}+1,\varepsilon^{-\lambda}-\varepsilon^{-\lambda_1}-1]$ there is a constant $C>0$ such that
\begin{equation*}
 \begin{split}
  \|m_\xi-m_\xi^{\varepsilon} \|_{L^2(D)}~&\leq~ C \varepsilon^{-\lambda_1/2} \varepsilon^{2\alpha}\\
  \|\partial_x m_\xi- \partial_x m_\xi^{\varepsilon} \|_{L^2(D)}~&\leq~ C \varepsilon^{-\lambda_1/2} \varepsilon^{\alpha}.
 \end{split}
\end{equation*}
\end{lemma}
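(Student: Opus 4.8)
The plan is to split the domain into the three zones appearing in the definition of $m_\xi^\eps$ and to show that $m_\xi$ and $m_\xi^\eps$ differ appreciably only on a core strip of $x$-width $\sim\eps^{-\lambda_1}$, the factor $\eps^{-\lambda_1/2}$ arising precisely as the square root of the volume of this strip. On the core region $A=\{(x,y):|x-\xi|\le\eps^{-\lambda_1}\}$ one has $\tilde m^\eps(\cdot-\xi)=m(\cdot-\xi)$ at every grid point, so $m_\xi^\eps$ is exactly the piecewise linear interpolant $Pm_\xi$ of the smooth function $m_\xi$. I would then apply the standard elementwise interpolation estimate for linear finite elements: on each simplex of diameter $\lesssim a$ the Taylor/Bramble--Hilbert bound gives $|m_\xi-Pm_\xi|\le Ca^2\|D^2 m_\xi\|_{L^\infty}$ and $|\nabla(m_\xi-Pm_\xi)|\le Ca\|D^2 m_\xi\|_{L^\infty}$. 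Since $m_\xi$ depends on $x$ alone, $D^2 m_\xi$ reduces to $m''(x-\xi)$, bounded uniformly by $C_m:=\sup_\bR|m''|<\infty$ (finite by \eqref{eqn_tail}). Integrating the squared pointwise bounds over $A$, whose volume is $2\eps^{-\lambda_1}$, yields $\norm{m_\xi-m_\xi^\eps}_{L^2(A)}\le Ca^2\eps^{-\lambda_1/2}$ and $\norm{\partial_x m_\xi-\partial_x m_\xi^\eps}_{L^2(A)}\le Ca\,\eps^{-\lambda_1/2}$, which are the asserted bounds after substituting $a\sim\eps^\alpha$.

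Next I would check that the two outer zones contribute only exponentially small terms. On the transition zone $B=\{\eps^{-\lambda_1}\le|x-\xi|\le\eps^{-\lambda_1}+1\}$ the construction gives $|m-\tilde m^\eps|\le 2c_1 e^{-c_2\eps^{-\lambda_1}}$, while both $|m'|$ and $|\frac{d}{dx}\tilde m^\eps|$ are at most $2c_1c_2 e^{-c_2\eps^{-\lambda_1}}$ there by \eqref{eqn_tail} and the assumption on $\tilde m^\eps$; as $B$ has bounded $x$-length and interpolation only multiplies these by absolute constants, its $L^2$ contribution is $O(e^{-c_2\eps^{-\lambda_1}})$ both for the function and for $\partial_x$. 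On the tail $C=\{|x-\xi|\ge\eps^{-\lambda_1}+1\}$ the restriction $\xi\in[-\eps^{-\lambda}+\eps^{-\lambda_1}+1,\eps^{-\lambda}-\eps^{-\lambda_1}-1]$ guarantees that $\tilde m^\eps(\cdot-\xi)\equiv\pm1$ throughout the domain before the boundary $x=\pm\eps^{-\lambda}$ is reached, so $m_\xi^\eps\equiv\pm1$ and $\partial_x m_\xi^\eps\equiv0$ there, whereas $|1\mp m_\xi|\le c_1 e^{-c_2|x-\xi|}$ and $|\partial_x m_\xi|\le c_1c_2 e^{-c_2|x-\xi|}$; integrating these exponentials from $|x-\xi|=\eps^{-\lambda_1}$ outward again gives $O(e^{-c_2\eps^{-\lambda_1}})$. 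Since for $\eps$ small these exponentially small terms are dominated by the polynomial quantities $\eps^{-\lambda_1/2}\eps^{2\alpha}$ and $\eps^{-\lambda_1/2}\eps^{\alpha}$ (recall $\lambda_1<2\alpha$), summing over $A,B,C$ and enlarging the constant yields both inequalities.

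The main obstacle is the interpolation estimate on $A$: in $d+1$ dimensions the simplicial interpolant of $m_\xi$ need not be a function of $x$ alone, so one cannot simply reduce to the one-dimensional interpolation of $m$. I would therefore rely on the elementwise estimate, which controls $|m_\xi-Pm_\xi|$ and its full gradient on each simplex solely through $a$ and $\sup|m''|$, irrespective of the orientation of the simplices; the only analytic input is the uniform bound on $m''$ together with the fact that the fixed subdivision of a cube into simplices is shape-regular with diameters comparable to $a$, which keeps the interpolation constants uniform in $\eps$. A secondary, purely bookkeeping point is that cells straddling the boundaries between $A$, $B$ and $C$ are covered by taking the larger of the adjacent bounds, which does not affect the orders.
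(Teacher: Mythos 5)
Your proposal is correct, and it is worth pointing out that the paper itself gives no argument here: its proof of Lemma \ref{lem_discbo1} is a one-line citation of the one-dimensional estimate \cite[Lemma 2.4]{We10} together with the remark that it ``immediately adapts'' to the higher-dimensional situation. Your write-up is therefore a self-contained substitute rather than a retracing of the paper. The overall shape --- core strip where $\tilde m^\eps=m$, transition zone of unit width, tails where $m_\xi^\eps\equiv\pm1$, with the factor $\eps^{-\lambda_1/2}$ arising as the square root of the volume of the core and all other contributions exponentially small --- is exactly the structure of the one-dimensional argument the paper points to, so in spirit you follow the intended route. What you supply, and what the citation leaves implicit, is the one point that is genuinely new in $d+1$ dimensions: on the core, $m_\xi^\eps$ is the simplicial interpolant of $m_\xi$, which need not be a function of $x$ alone, so one cannot reduce to one-dimensional interpolation of $m$; your elementwise Taylor/Bramble--Hilbert bounds $|m_\xi-Pm_\xi|\le Ca^2\sup|m''|$ and $|\nabla(m_\xi-Pm_\xi)|\le Ca\sup|m''|$, uniform in $\eps$ by shape regularity of the fixed subdivision of cubes into simplices, are precisely the right tool, and your treatment of cells straddling the zone boundaries is the correct glue. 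The exponents check out ($Ca^2\eps^{-\lambda_1/2}=C\eps^{2\alpha-\lambda_1/2}$ and $Ca\,\eps^{-\lambda_1/2}=C\eps^{\alpha-\lambda_1/2}$), and the exponential terms $O(e^{-c_2\eps^{-\lambda_1}})$ are negligible against any power of $\eps$, so your parenthetical appeal to $\lambda_1<2\alpha$ is not even needed for that last comparison.
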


\begin{proof}
See \cite[Lemma 2.4]{We10} which immediately adapts to our higher dimensional situation. Note that $N$ from this reference corresponds to $\floor{L/a}\approx\eps^{-\lambda-\alpha}$.
\end{proof}

\section{Gaussian estimates}\label{sec:3}

In this section we introduce several finite-dimensional Gaussian measures that will serve as reference measures for the measure $\mu^{L,a,\eps}$ we want to study. We provide estimates for their normalization constants that will be needed in section \ref{sec_normalization}.

Let us first introduce a measure on $\cH^{L,a}$ which is an analogue of the discrete Gaussian free field or harmonic crystal. We set
\begin{equation*}\label{eqn_defDGFF}
\nu_1^\eps(dh)~=~\frac{1}{Z_1^\eps}\exp\left(-\frac{1}{2\eps}\int\limits_{D_L}\abs{\nabla h(z)}^2dz\right)\cL^N(dh)\ ,
\end{equation*}
where $Z_1^\eps$ is the appropriate normalization constant. Recall that $\cL^N$ denotes Lebesgue measure on the $N$-dimensional space $\cH^{L,a}$. In a similar way we define an unscaled version of the above measure:
\begin{equation*}\label{eqn_defDGFFunscaled}
\nu_2^\eps(dh)~=~\frac{1}{Z_2^\eps}\exp\left(-\frac{1}{2}\int\limits_{D_L}\abs{\nabla h(z)}^2dz\right)\cL^N(dh)\ .
\end{equation*}
Further we introduce an analogue of the discrete massive Gaussian free field. Similar to the definition of $\cH^{L,a}$ in Section 1 let us denote by $\cH^{L,a}_0$ the set of piecewise linear functions $h:D_{L}\to\bR$ having zero boundary conditions at $x=\pm L$ instead of the boundary condition $\pm1$. On $\cH^{L,a}_0$ we define the measure
\begin{equation*}\label{eqn_defMDGFF}
\rho^\eps_\kappa(dh)~=~\frac{1}{Z_3^{\eps,\kappa}}\exp\left(-\frac{\kappa}{2\eps}\int\limits_{D_L}\abs{\nabla h(z)}^2+\abs{h(z)}^2dz\right)\cL^N(dh)\ .
\end{equation*}
The following estimates are needed in the calculations in Section \ref{sec_normalization}.

\begin{lemma}\label{lem_normalizationbounds}
 We have the following bounds on the normalization constants:
 \begin{align*}
  i) & \quad \frac{Z_2^\eps}{Z_1^\eps}~=~\eps^{-\frac{N}{2}}\exp\left(\frac{1}{L}\left(\frac{1}{\eps}-1\right)\right)\ ,\\
  ii) & \quad \exp\left(\frac{1}{\eps L}\right)\kappa^{-\frac{N}{2}}(1+CL^2)^{-\frac{N}{2}}~\leq~\frac{Z_3^{\eps,\kappa}}{Z_1^\eps}~\leq~\exp\left(\frac{1}{\eps L}\right)\kappa^{-\frac{N}{2}}\ .
 \end{align*}
\end{lemma}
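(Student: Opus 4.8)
The plan is to reduce all three normalization constants to finite--dimensional Gaussian integrals and to read off the ratios from the resulting determinants and exponential prefactors. The starting point is a decomposition of the Dirichlet energy around its minimizer. Identifying $\cH^{L,a}$ with $\bR^N$ through the interior grid values, let $\bar h\in\cH^{L,a}$ be the minimizer of the discrete Dirichlet energy $h\mapsto\int_{D_L}\abs{\nabla h}^2dz$ over the affine space with the prescribed $\pm1$ boundary values, and write a general $h\in\cH^{L,a}$ as $h=\bar h+v$ with $v$ ranging over the linear space $\cH^{L,a}_0$ of piecewise linear functions vanishing at $x=\pm L$. Since $\bar h$ is the minimizer its first variation vanishes, so the cross term $\int_{D_L}\nabla\bar h\cdot\nabla v\,dz$ is zero for every such $v$ and the energy splits as
$$\int_{D_L}\abs{\nabla h}^2dz~=~E_0+\int_{D_L}\abs{\nabla v}^2dz,\qquad E_0:=\int_{D_L}\abs{\nabla\bar h}^2dz\ .$$
I would then identify $\bar h$ with the affine ramp $\bar h(x,y)=x/L$: it is piecewise linear, attains the boundary values, and satisfies the orthogonality relation because $\int_{D_L}\nabla\bar h\cdot\nabla v\,dz=\frac1L\int_{D_L}\partial_xv\,dz=\frac1L\int_{\partial D_L}v\,n_x\,dS=0$, the last integral vanishing since $v=0$ on $\{x=\pm L\}$ and $n_x=0$ on the remaining faces. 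Hence $E_0=\frac{1}{L^2}\abs{D_L}=\frac2L$.

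For part i), translation invariance of $\cL^N$ turns both constants into Gaussian integrals over $\cH^{L,a}_0$ with the positive definite quadratic form $v\mapsto\int_{D_L}\abs{\nabla v}^2dz=v^TAv$, giving $Z_1^\eps=e^{-E_0/(2\eps)}(2\pi\eps)^{N/2}(\det A)^{-1/2}$ and $Z_2^\eps=e^{-E_0/2}(2\pi)^{N/2}(\det A)^{-1/2}$. The factors $(2\pi)^{N/2}$ and $(\det A)^{-1/2}$ cancel in the ratio, and inserting $E_0=2/L$ yields
$$\frac{Z_2^\eps}{Z_1^\eps}~=~\eps^{-N/2}\exp\left(\frac{E_0}{2}\left(\frac1\eps-1\right)\right)~=~\eps^{-N/2}\exp\left(\frac1L\left(\frac1\eps-1\right)\right)\ .$$

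For part ii), $\rho^\eps_\kappa$ lives on $\cH^{L,a}_0$, whose elements need no affine shift, and the form $\int_{D_L}\abs{\nabla h}^2+\abs h^2dz$ is represented by $A+B$, where $B$ is the positive definite mass matrix of $\int_{D_L}\abs h^2dz$. Thus $Z_3^{\eps,\kappa}=(2\pi\eps/\kappa)^{N/2}\det(A+B)^{-1/2}$ and
$$\frac{Z_3^{\eps,\kappa}}{Z_1^\eps}~=~e^{1/(\eps L)}\kappa^{-N/2}\left(\frac{\det A}{\det(A+B)}\right)^{1/2}\ .$$
Writing $\det(A+B)/\det A=\prod_i(1+\mu_i)$ with $\mu_i$ the generalized eigenvalues of $B$ relative to $A$, i.e.\ extremal values of the Rayleigh quotient $\int_{D_L}\abs v^2dz/\int_{D_L}\abs{\nabla v}^2dz$, the upper bound follows from $B\geq0$, which forces each $\mu_i\geq0$ and hence $\det A/\det(A+B)\leq1$. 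For the lower bound I would use that every $v\in\cH^{L,a}_0$ is a genuine Lipschitz function vanishing on $\{x=\pm L\}$, so the one--dimensional Poincaré inequality in the $x$--variable gives $\int_{D_L}\abs v^2dz\leq CL^2\int_{D_L}\abs{\nabla v}^2dz$ with a Poincaré constant scaling like $L^2$; hence $\mu_i\leq CL^2$ and $\det(A+B)/\det A\leq(1+CL^2)^N$, which produces the factor $(1+CL^2)^{-N/2}$.

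The genuinely delicate point is the exact value $E_0=2/L$: one must confirm that the affine ramp really lies in $\cH^{L,a}$ despite the collar construction in the definition of $P$, and that it is the exact rather than merely approximate Galerkin minimizer, so that the prefactor $\exp(\tfrac1L(\tfrac1\eps-1))$ emerges with no error term. The remaining ingredients, namely the Gaussian determinant identities and the $L^2$--scaling of the Poincaré constant controlling the generalized eigenvalues, are standard.
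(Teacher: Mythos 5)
Your proof follows essentially the same route as the paper: there the square is completed around the linear ramp $l(x,y)=x/L$ (your $\bar h$), giving $Z_1^\eps = e^{-1/(\eps L)}(2\pi\eps)^{N/2}\det(\Lambda)^{-1/2}$, $Z_2^\eps = e^{-1/L}(2\pi)^{N/2}\det(\Lambda)^{-1/2}$ and $Z_3^{\eps,\kappa} = (2\pi\eps/\kappa)^{N/2}\det(\Lambda+I)^{-1/2}$, and the determinant comparison in ii) uses exactly your Poincar\'e argument $\langle h,Ih\rangle \leq CL^2\langle h,\Lambda h\rangle$, hence $\det(\Lambda)^{-1/2}(1+CL^2)^{-N/2}\leq\det(\Lambda+I)^{-1/2}\leq\det(\Lambda)^{-1/2}$. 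The ``delicate point'' you flag (that the ramp lies in $\cH^{L,a}$ and is the exact discrete minimizer, so that $E_0=2/L$ with no error term) is simply asserted in the paper without comment, so your treatment is, if anything, slightly more careful on that issue.
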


\begin{proof}
 Note that the integrals appearing in the density of $\nu_2^\eps$ for example can be written in the coordinates $(h_z)_{z\in D_{L,a}}$ of $\cH^{L,a}$ where $h_z=h(z)$ for $z\in D_{L,a}$. Indeed, let $l\in\cH^{L,a}$ be the linear function $l(x,y)=x/L$. Then we can write
\begin{align*}
 \int_{D_L}\abs{\nabla h}^2dz~&=~\int_{D_L}\abs{\nabla h-\nabla l}^2dz + \int_{D_L}\abs{\nabla l}^2dz \\
                               &=~\sum\limits_{z,z'\in D_{L,a}} (h_z-l_z)\Lambda_{zz'}(h_{z'}-l_{z'})+\frac{2}{L} \ ,
\end{align*}
for a suitable positive definite matrix $\Lambda\in\bR^{N\times N}$ since the first term in the first line is bilinear in $h-l$. Having represented the exponent in the density of $\nu_2^\eps$ in this form we easily calculate the normalization constant
\begin{equation*}\label{eqn_Z3}
 Z_2^\eps~=~\exp\left(-\frac{1}{L}\right)(2\pi)^{\frac{N}{2}}det(\Lambda)^{-\frac{1}{2}}\ .
\end{equation*}
Similarly we obtain
\begin{equation}\label{eqn_Z1}
Z_1^\eps~=~\exp\left(-\frac{1}{\eps L}\right)(2\pi\eps)^{\frac{N}{2}}det(\Lambda)^{-\frac{1}{2}}\ ,
\end{equation}
which proves i). Now write for $h\in\cH^{L,a}_0$ in a similar manner as above
\begin{equation}\label{eqn_H1coordinates}
\int_{D_L}\abs{\nabla h}^2+h^2dz~=~\sum\limits_{z,z'\in D_{L,a}} h_z\Lambda_{zz'}h_{z'} + \sum\limits_{z,z'\in D_{L,a}} h_zI_{zz'}h_{z'}\ ,
\end{equation}
where $I\in\bR^{N\times N}$ is another suitable positive definite matrix. Thus we calculate
\begin{equation}\label{eqn_Z2}
Z_3^{\eps,\kappa}~=~\left(\frac{2\pi\eps}{\kappa}\right)^{\frac{N}{2}}det(\Lambda+I)^{-\frac{1}{2}}\ .
\end{equation}
The Poincar\'{e} inequality shows that for all $h$:
$$\langle h,Ih\rangle~=~\int_{D_L} h^2dz~\leq~C\cdot L^2\int_{D_L}\abs{\nabla h}^2dz~=~C\cdot L^2\langle h,\Lambda h\rangle\ .$$
Hence we conclude that
$$(1+CL^2)^{-\frac{N}{2}}det(\Lambda)^{-\frac{1}{2}}~\leq~det(\Lambda + I)^{-\frac{1}{2}}~\leq~det(\Lambda)^{-\frac{1}{2}}\ .$$
Combining this with formulas (\ref{eqn_Z1}) and (\ref{eqn_Z2}) we finish the proof of ii).
\end{proof}

We finish this section by proving a concentration property of the measure $\rho^\eps_\kappa$ as $\eps$ goes to $0$.

\begin{lemma}\label{lem_concentrationMDGFF}
Let $L(\eps)=\eps^{-\lambda}$ and $a(\eps)=\eps^{\alpha}$ with $\lambda,\alpha>0$. Then there exists a constant $C>0$ such that for all $\delta, r>0$~:
 \begin{align*}
  i) & \quad \rho^\eps_\kappa\left\{h\in\cH^{L,a}_0\ : \ \norm{h}_\infty\geq\delta\right\}~\leq~N\exp\left(-\frac{\delta^2}{2C\eps^{1-2\alpha}}\right)\ ,\\
  ii) & \quad \rho^\eps_\kappa\left\{h\in\cH^{L,a}_0\ : \ \norm{h}_{H^1}\geq\sqrt{\frac{\eps N}{\kappa}}+r\right\}~\leq~\exp\left(-\frac{\kappa r^2}{2\eps}\right)\ .
 \end{align*}
\end{lemma}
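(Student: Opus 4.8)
Both reference measures are centred Gaussians in the nodal coordinates, so the whole lemma reduces to standard Gaussian estimates once the covariance is identified. Using the quadratic–form representation established in the proof of Lemma \ref{lem_normalizationbounds}, namely $\int_{D_L}\abs{\nabla h}^2+h^2\,dz=\langle h,(\Lambda+I)h\rangle$ in the coordinates $(h_z)_{z\in D_{L,a}}$, the measure $\rho^\eps_\kappa$ is the centred Gaussian on $\cH^{L,a}_0\cong\bR^N$ with covariance $\tfrac{\eps}{\kappa}(\Lambda+I)^{-1}$. All of the work is then in reading off the two estimates from this Gaussian.

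For ii) I would whiten the field: write $h=\sqrt{\eps/\kappa}\,(\Lambda+I)^{-1/2}Z$ with $Z$ a standard Gaussian vector in $\bR^N$. Then $\norm{h}_{H^1(D_L)}^2=\langle h,(\Lambda+I)h\rangle=\tfrac{\eps}{\kappa}\abs{Z}^2$, so $\norm{h}_{H^1(D_L)}=\sqrt{\eps/\kappa}\,\abs{Z}$. The map $Z\mapsto\abs{Z}$ is $1$-Lipschitz, so the Gaussian Lipschitz concentration inequality gives $\rho^\eps_\kappa(\abs{Z}\ge\bE\abs{Z}+t)\le e^{-t^2/2}$, while $\bE\abs{Z}\le(\bE\abs{Z}^2)^{1/2}=\sqrt{N}$. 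Choosing $t=\sqrt{\kappa/\eps}\,r$ and bounding $\bE\abs{Z}$ by $\sqrt{N}$ converts this into exactly the statement $\rho^\eps_\kappa(\norm{h}_{H^1}\ge\sqrt{\eps N/\kappa}+r)\le e^{-\kappa r^2/(2\eps)}$. This part is clean and self-contained.

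For i) I would use that a piecewise linear function attains its extrema at the grid nodes, so $\norm{h}_\infty=\max_{z\in D_{L,a}}\abs{h_z}$. A union bound over the $N$ nodes reduces the claim to a one–dimensional Gaussian tail estimate for each nodal marginal $h_z\sim N(0,\sigma_z^2)$ with $\sigma_z^2=\tfrac{\eps}{\kappa}[(\Lambda+I)^{-1}]_{zz}$: using $\rho^\eps_\kappa(\abs{h_z}\ge\delta)\le\exp(-\delta^2/2\sigma_z^2)$ and summing produces the prefactor $N$. What remains is a uniform bound $\sigma_z^2\le C\eps^{1-2\alpha}$ on the nodal variances. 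The natural route is the operator inequality $(\Lambda+I)^{-1}\preceq I^{-1}$ (valid since $\Lambda\succeq0$) together with the spectral equivalence of the $P_1$ mass matrix, $I\succeq c\,a^{d+1}\,\mathrm{Id}$, which bounds $[(\Lambda+I)^{-1}]_{zz}\le[I^{-1}]_{zz}$ by a negative power of the mesh and hence $\sigma_z^2$ by the required power of $\eps$.

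The main obstacle is precisely this last estimate: controlling the pointwise (nodal) variance of the discrete massive free field, which reflects the ultraviolet divergence of $(-\Delta+1)^{-1}$ on the diagonal in dimension $d+1\ge2$. The virtue of the inequality $(\Lambda+I)^{-1}\preceq I^{-1}$ is that it bypasses any computation of the discrete Green's function and reduces the variance bound to the elementary spectral equivalence of the finite-element mass matrix. The care required is to keep this bound uniform over all nodes, including those adjacent to the boundary faces $x=\pm L$, and to match the resulting power of the mesh $a\sim\eps^\alpha$ against the exponent recorded in the statement; everything else is a routine Gaussian tail computation.
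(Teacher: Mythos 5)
Your proposal is correct and takes essentially the same route as the paper: part i) is a union bound over the $N$ nodes plus a per-node Gaussian tail estimate, with the nodal variance controlled by dropping $\Lambda$ and invoking the mass-matrix bound $\langle u,Iu\rangle\geq Ca^{d+1}\langle u,u\rangle$, and part ii) is the whitening $u=(\Lambda+I)^{1/2}h$ followed by standard Gaussian concentration of the Euclidean norm. The only differences are presentational: the paper proves the mass-matrix spectral bound explicitly by a cube-by-cube symmetry argument rather than citing it as the standard $P_1$ equivalence, and note that (exactly as in the paper's own proof) this variance bound yields the exponent $1-(d+1)\alpha$ rather than the stated $1-2\alpha$, which is what is actually needed in the later application.
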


\begin{proof}i) Note that if the random function $h\in\cH^{L,a}_0$ is distributed according to $\rho^\eps_\kappa$ then each coordinate $h_z=h(z)$ with $z\in D_{L,a}$ is a centered Gaussian random variable. Hence we can estimate~:
\begin{align*}
 \rho^\eps_\kappa\left\{\norm{h}_\infty\geq\delta\right\}~&\leq~\sum\limits_{z\in D_{L,a}}\rho^\eps_\kappa\left\{\abs{h_z}\geq\delta\right\}\\
&\leq~\sum\limits_{z\in D_{L,a}}\exp\left(-\frac{\delta^2}{2\bE[h_z^2]}\right)\ .
\end{align*}
Since $\rho^\eps_\kappa$ is a centered Gaussian measure with covariance operator $\frac{\kappa}{\eps}(\Lambda+I)$ we easily calculate:
\begin{equation*}\label{eqn_concentration1}
 \bE[h_z^2]~=~\frac{\eps}{\kappa}\big((\Lambda+I)^{-1}\big)_{z,z}~\leq~\frac{\eps}{\kappa}\sup\limits_{u\neq0}\frac{\langle u,(\Lambda+I)^{-1}u\rangle}{\langle u,u\rangle}
~=~\frac{\eps}{\kappa}\left(\inf\limits_{u\neq0}\frac{\langle u,(\Lambda+I)u\rangle}{\langle u,u\rangle}\right)^{-1}\ .
\end{equation*}
To finish the proof we will show that there exists a constant $C>0$ such that:
\begin{equation}\label{eqn_concentration2}
 \langle u,(\Lambda+I)u\rangle~\geq~\langle u,I u\rangle~\geq~C a^{d+1}\langle u, u\rangle\quad\mbox{for all}\ u\in\bR^N\ .
\end{equation}
Indeed, identifying $\bR^N$ again with $\cH_0^{L,a}$ we can write :
\begin{equation*}\label{eqn_concentration3}
\langle u,I u\rangle~=~\norm{u}^2_{L^2(D_L)}~=~\sum\limits_{i=1}^K\norm{u}^2_{L^2(C_{i})}\ ,
\end{equation*}
where $C_{i}$ are the $K=2\floor{\frac{L}{a}}\floor{\frac{1}{a}}^{d}$ small cubes of sidelength $a(\eps)$ of the grid $D_{L,a}$. Let $z_j$ for $j=1,\dots,2^{d+1}$ be the vertices of such a cube and $u_j=u_{z_j}$. The $L^2$-norm of $u$ on $C_i$ is bilinear in the coordinates of $u$. Hence by symmetry there are constants $b_1>0$ and $b_{j,k}$ depending only on $d$ such that
\begin{equation*}\label{eqn_concentration4}
\begin{split}
\norm{u}^2_{L^2(C_{i})}~&=~a^{d+1}\left(b_1\sum\limits_j u_j^2 + \sum\limits_{j\neq k}b_{j,k}u_ju_k\right)\\
~&=~\frac{a^{d+1}b_1}{2^{d+2}-2}\sum\limits_{j\neq k}\left(u_j^2+u_k^2+ \frac{(2^{d+2}-2)b_{j,k}}{b_1}u_ju_k\right)\ .
\end{split}
\end{equation*}
 Since the left hand side vanishes if and only if $u_j=0$ for all $j$ we must have that\\$b_2:=\max_{j\neq k}\abs{(2^{d+2}-2)b_{j,k}/b_1}<2$ and hence we obtain
\begin{equation*}\label{eqn_concentration5}
\norm{u}^2_{L^2(C_{i})}~\geq~\frac{a^{d+1}b_1}{2^{d+2}-2} \sum\limits_{j\neq k}\left(1-\frac{b_2}{2}\right)\left(u_j^2+u_k^2\right)~=~Ca^{d+1}\sum\limits_ju_j^2\ .
\end{equation*}
ii) Recall equation (\ref{eqn_H1coordinates}) which expresses the $H^1$-norm in coordinates of $\cH^{L,a}_0$. 
Now use the linear transformation $u=(I+\Lambda)^{1/2}h$ to write
\begin{equation*}
 \rho^\eps_\kappa\left\{h\in\cH^{L,a}_0\ \vert \ \norm{h}_{H^1}\geq R\right\}~=~\left(\frac{\kappa}{2\pi\eps}\right)^{\frac{N}{2}}\int\limits_{\{\sum\limits_z u_z^2\geq R\}}
\exp\left(-\frac{\kappa}{2\eps}\sum\limits_{z}u_z^2\right)\prod\limits_zdu_z\ .
\end{equation*}
Thus the problem is reduced to considering a centered Gaussian measure on $\bR^N$ with covariance matrix $\frac{\eps}{\kappa}Id$. Using Lemma \ref{lem_concentrationGaussian} then finishes the proof.
\end{proof}
Recall the following well known result on concentration of Gaussian measures.
\begin{lemma}\label{lem_concentrationGaussian}
 Let $\mu$ be a centered Gaussian measure on a Hilbert space $E$ with covariance operator $\Sigma$, whose spectral radius is denoted by $\sigma$. Then one has
\begin{equation*}\label{eqn_concentrationGaussian}
\mu \bigl(x: \| x \| \geq \big( \text{\emph{Tr}}\, \Sigma \big)^{1/2} +r  \bigr)~\leq~ e^{-r^2/2 \sigma^2}\ . 
\end{equation*}
\end{lemma}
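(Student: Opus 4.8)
The plan is to identify the norm as a Lipschitz functional and then invoke the Gaussian concentration phenomenon. First I would note that $f(x)=\norm{x}$ is $1$-Lipschitz on $E$, since $\bigl|\norm{x}-\norm{y}\bigr|\leq\norm{x-y}$. This reduces the statement to two ingredients: a bound on the mean $\bE_\mu[\norm{x}]$, and a concentration inequality for $1$-Lipschitz functionals of $\mu$ around their mean.

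For the centering I would use Jensen's inequality together with the elementary identity $\bE_\mu[\norm{x}^2]=\mathrm{Tr}\,\Sigma$, which follows by expanding in an orthonormal eigenbasis $(e_i)$ of $\Sigma$ with eigenvalues $(\lambda_i)$: one has $\bE_\mu[\norm{x}^2]=\sum_i\bE_\mu[\langle x,e_i\rangle^2]=\sum_i\lambda_i$. Hence $\bE_\mu[\norm{x}]\leq(\bE_\mu[\norm{x}^2])^{1/2}=(\mathrm{Tr}\,\Sigma)^{1/2}$, which accounts for the centering constant $(\mathrm{Tr}\,\Sigma)^{1/2}$ in the statement. It then suffices to prove the tail bound $\mu(\norm{x}\geq\bE_\mu[\norm{x}]+r)\leq e^{-r^2/2\sigma^2}$, where $\sigma^2$ is the weak variance $\sup_{\|\xi\|\leq1}\langle\xi,\Sigma\xi\rangle$, i.e. the spectral radius of $\Sigma$.

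The core is the Gaussian concentration inequality for Lipschitz functionals. The route I would take is the Borell--Tsirelson--Ibragimov--Sudakov inequality: writing $\norm{x}=\sup_{\|\xi\|\leq1}\langle x,\xi\rangle$ exhibits the norm as the supremum of a centered Gaussian process whose marginal variances are bounded by $\sigma^2$, and this inequality yields directly $\mu(\norm{x}-\bE_\mu[\norm{x}]\geq r)\leq\exp(-r^2/2\sigma^2)$. A more self-contained alternative would be to first reduce to finite dimensions by replacing $E$ with the span of the top eigenvectors of $\Sigma$ (monotone convergence controls the error in both the mean and the tail), and then on $\bR^N$ apply the logarithmic Sobolev inequality for the Gaussian measure together with the Herbst argument: the bound $\norm{\nabla f}\leq1$ a.e. and the fact that the variance scale of $\mu$ in every direction is at most $\sigma^2$ give the subgaussian estimate $\bE_\mu[e^{t(f-\bE_\mu f)}]\leq e^{\sigma^2t^2/2}$, and optimizing over $t$ in Markov's inequality produces the claimed bound.

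I expect the main obstacle to be pinning down the correct variance constant: the relevant parameter in the exponent is the spectral radius of $\Sigma$ and not its trace. This is precisely what separates the sharp Lipschitz concentration from the crude estimate one gets by handling all coordinates simultaneously, and it is what makes the bound useful in Lemma \ref{lem_concentrationMDGFF}, where $\Sigma=\frac{\eps}{\kappa}\mathrm{Id}$ has spectral radius $\eps/\kappa$ while its trace is $N\eps/\kappa$. In the Herbst route this amounts to the observation that a $1$-Lipschitz function of $\mathcal{N}(0,\Sigma)$ concentrates like a $\sigma$-Lipschitz function of a standard Gaussian, so that only $\sigma^2$ enters the exponent. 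The passage to infinite dimensions is a secondary, routine point handled by the eigenbasis truncation above.
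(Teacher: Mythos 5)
Your proof is correct. There is nothing in the paper to compare it against: the author states this lemma as a recalled ``well known result'' and gives no proof at all, so your argument supplies precisely the standard derivation that the paper leaves implicit. Both of your routes are sound: writing $\norm{x}=\sup_{\norm{\xi}\leq1}\langle x,\xi\rangle$ exhibits the norm as the supremum of a centered Gaussian process with marginal variances bounded by the weak variance $\sup_{\norm{\xi}\leq1}\langle\xi,\Sigma\xi\rangle$, so the Borell--Tsirelson--Ibragimov--Sudakov inequality gives the tail bound around the mean, and Jensen's inequality together with $\bE_\mu[\norm{x}^2]=\mathrm{Tr}\,\Sigma$ replaces the mean by $(\mathrm{Tr}\,\Sigma)^{1/2}$; the log-Sobolev/Herbst route after truncation to the span of finitely many eigenvectors works equally well. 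One point deserves to be made explicit rather than left implicit: as literally stated, the lemma calls $\sigma$ the spectral radius of $\Sigma$ but places $\sigma^2$ in the exponent. With that literal reading the bound is false in the regime relevant here (small eigenvalues), and it is inconsistent with the way the lemma is invoked in Lemma \ref{lem_concentrationMDGFF} ii), where $\Sigma=\frac{\eps}{\kappa}\mathrm{Id}$ and the exponent is $-\kappa r^2/(2\eps)$. The correct reading, which you adopted by defining $\sigma^2$ to be the weak variance (equivalently, the largest eigenvalue of $\Sigma$), is the one under which your proof and the paper's application both go through; stating this as a correction of the lemma's notation would make the argument airtight.
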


\section{Proof of the main theorem}\label{sec_normalization}

To prepare the proof of Theorem \ref{thm_mainthm} we express the measures $\mu^\eps:=\mu^{L,a,\eps}$ in terms of the Gaussian measures $\nu_1^\eps$:
\begin{equation*}\label{eqn_munu}
 \mu^\eps(dh)~=~\frac{1}{Z^\eps}\exp\left(-\frac{1}{2\eps}\int\limits_{D_L}F(h(z))dz\right)\nu_1^\eps(dh)\ .
\end{equation*}
Note that the normalization constant $Z^\eps$ takes the following form:
\begin{equation}\label{eqn_zeps}
 \begin{split}
Z^{\varepsilon}~&=~\int_{\cH^{L,a}} \exp \Bigl(-\frac{1}{\varepsilon}\int\limits_{D_L}F(h(z))dz \Bigr)     \nu_1^{\varepsilon}(\mathrm{d}h)\\ 
&=~\frac{1}{Z^{\varepsilon}_1}\exp\Bigl(-\frac{C_*}{\varepsilon}\Bigr)  \int_{\cH^{L,a}}  \exp \Bigl(-\frac{1}{\varepsilon} \cF(h)\Bigl) \cL^{N}(\mathrm{d}h)\ .
\end{split}
\end{equation}
The main step in the proof will be to give a lower bound on this normalization constant. This is done in Proposition \ref{prop_boundnormalizationconstant} by calculating the integral in (\ref{eqn_zeps}) in a tubular neighborhood of the set of minimizers $M$. To this end recall the following version of the coarea formula:

\begin{lemma}\label{lem_coarea}
Let $f$ be a Lipschitz function $f:A \subseteq E \to I \subseteq \bR$, where $E$ is a $N$-di\-men\-sion\-al Euclidean space and $A$ is an open subset and $I$ some interval. Denote by $\mathcal{L}^N, \mathcal{L}^1$ and $\mathcal{H}^{N-1}$ the Lebesgue measure on $E$, on $\bR$ and the $(N-1)$-dimensional Hausdorff measure on $E$ respectively. Suppose that the gradient (which exists $\mathcal{L}^N$-a.e.) $Df$ does not vanish $\mathcal{L}^N$ a.e. in $A$. Then for every nonnegative measurable test function $\varphi:A \rightarrow \bR$ one has the following formula:
\begin{equation*}\label{eqn_coarea}
\int_A \varphi(x)\mathcal{L}^N(\mathrm{d}x) = \int_I \mathcal{L}^1 (\mathrm{d} \xi) \int_{f^{-1}(\xi)} \mathcal{H}^{N-1}(\mathrm{d}x) \frac{1}{|Df(x)|_E} \varphi(x)\ .
\end{equation*}
\end{lemma}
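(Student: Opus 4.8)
The statement is the classical coarea formula specialised to a real-valued $f$, and I would obtain it from the general form due to Federer by a one-line substitution. Recall that for a Lipschitz $f:\bR^N\to\bR$ the coarea formula reads
\[
\int_A g(x)\,\abs{Df(x)}_E\,\mathcal{L}^N(dx)~=~\int_{\bR}\mathcal{L}^1(d\xi)\int_{f^{-1}(\xi)}g(x)\,\mathcal{H}^{N-1}(dx)
\]
for every nonnegative measurable $g$. Granting this, the plan is simply to set $g=\varphi/\abs{Df}_E$ on the set $\{Df\neq0\}$ and $g=0$ elsewhere. By hypothesis $Df\neq0$ holds $\mathcal{L}^N$-a.e.\ in $A$, so $g\,\abs{Df}_E=\varphi$ holds $\mathcal{L}^N$-a.e., and the left-hand side becomes $\int_A\varphi\,d\mathcal{L}^N$. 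The interval $I$ replaces $\bR$ because $f(A)\subseteq I$, so the integrand vanishes off $I$. This yields exactly the claimed identity, with the factor $\abs{Df}_E^{-1}$ appearing inside the level-set integral.

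The substance therefore lies entirely in the classical formula displayed above, and if I wanted a self-contained argument I would reconstruct its proof along the following lines. First, by Rademacher's theorem $f$ is differentiable $\mathcal{L}^N$-a.e., so $Df$ is defined a.e.\ and $\abs{Df}_E$ is the correct Jacobian factor (the coarea factor $\sqrt{\det(Df\,Df^{\mathsf T})}$ reduces to $\abs{Df}_E$ when the target is one-dimensional). By standard approximation it suffices to treat $g=\mathbf{1}_B$ with $B$ Borel, i.e.\ to establish the measure-theoretic identity $\int_{\bR}\mathcal{H}^{N-1}\bigl(f^{-1}(\xi)\cap B\bigr)\,d\xi=\int_B\abs{Df}_E\,d\mathcal{L}^N$. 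Second, I would verify this for an affine $f$, where the level sets are parallel hyperplanes and the identity is Fubini's theorem after an orthogonal change of coordinates aligning the gradient with a coordinate axis.

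The main obstacle is the passage from the affine case to a general Lipschitz $f$. Here I would localise: cover $A$ up to a null set by small Borel pieces on which $f$ is uniformly well approximated by its differential, invoke the affine computation on each piece with a controlled multiplicative error, and sum, using a Vitali or Besicovitch covering to keep overlaps negligible and then letting the mesh tend to zero. Two auxiliary points must be handled in this step: a Sard-type lemma guaranteeing that the critical set contributes nothing, i.e.\ $\mathcal{H}^{N-1}\bigl(f^{-1}(\xi)\cap\{Df=0\}\bigr)=0$ for $\mathcal{L}^1$-a.e.\ $\xi$ (this also justifies ignoring $g$ there in the substitution above), and the measurability of $\xi\mapsto\int_{f^{-1}(\xi)}g\,d\mathcal{H}^{N-1}$, which follows from the rectifiability of almost every level set together with a monotone-class argument. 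For the present paper, however, the cleanest route is to cite Federer's formula and carry out only the substitution, since no new geometric content beyond the general theorem is required.
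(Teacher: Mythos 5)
Your proposal is correct, and it matches the paper's treatment: the paper gives no proof of this lemma at all, stating it as a recalled version of the classical (Federer) coarea formula, and your substitution $g=\varphi/|Df|_E$ on $\{Df\neq 0\}$ (extended by zero) is exactly the standard derivation of this version from the classical one. You also handle the two points that make the substitution legitimate — the hypothesis that $Df\neq 0$ holds $\mathcal{L}^N$-a.e.\ in $A$, and the fact that $\mathcal{H}^{N-1}\bigl(f^{-1}(\xi)\cap\{Df=0\}\bigr)=0$ for $\mathcal{L}^1$-a.e.\ $\xi$, which justifies integrating $1/|Df|_E$ over the full level sets — so nothing essential is missing.
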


We want to apply the coarea formula to the Fermi coordinates of $m+L^2(D)$. To this end we need the following

\begin{lemma}\label{lem_gradientfermicoodinate}
 Let $A=\{h\in m+L^2(D)\ \vert\ \distt{h}<\beta\}$ be the set in which Fermi coordinates are defined and let $f:A\to\bR$ be the function given by 
$f(h)=\xi$ when $h$ has coordinates $(\xi,v)$. Then $f$ is Fr\'{e}chet differentiable with
$$Df(m_\xi+v)[w]~=~\frac{\langle \partial_x m_\xi, w\rangle_{L^2}}{\norm{\partial_x m_\xi}_{L^2}^2 - \langle v, \partial^2_x m_\xi \rangle_{L^2} }\ .$$
Furthermore, let $\tilde{f}:\bR^N\to\bR$ be the the composition of $f$ with the embedding $\bR^N\cong\cH^{L,a}\to m+L^2(D)$ obtained by linear interpolation and extension with $\pm1$. Then 
\begin{equation}\label{eqn_gradestimate}
\norm{\nabla \tilde{f}}_{\bR^N}~\leq~2^{d+1}a^{(d+1)/2}\cdot\frac{\norm{\partial_x m_\xi}_{L^2}}{\abs{\norm{\partial_x m_\xi}^2_{L^2} - \langle v, \partial^2_x m_\xi \rangle_{L^2}}}\ .
\end{equation}
\end{lemma}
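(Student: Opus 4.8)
The plan is to treat part i) as an application of the implicit function theorem and part ii) as a chain-rule computation combined with a counting estimate over the grid.

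For i), I would encode the Fermi coordinate $\xi$ as the implicit solution of the orthogonality constraint (\ref{eqn_normal}). Define $G:A\times\bR\to\bR$ by $G(h,\xi)=\langle h-m_\xi,\partial_x m_\xi\rangle_{L^2}$, so that $f(h)$ is characterized by $G(h,f(h))=0$. The two ingredients needed are the partial derivatives of $G$. Since $m_\xi(x,y)=m(x-\xi)$ one has the identities $\partial_\xi m_\xi=-\partial_x m_\xi$ and $\partial_\xi\partial_x m_\xi=-\partial_x^2 m_\xi$; substituting the latter and writing $h-m_\xi=v$ gives
$$\partial_\xi G(m_\xi+v,\xi)=\norm{\partial_x m_\xi}_{L^2}^2-\langle v,\partial_x^2 m_\xi\rangle_{L^2},$$
while the $h$-derivative is the bounded linear functional $w\mapsto\langle w,\partial_x m_\xi\rangle_{L^2}$. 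All inner products are finite because $\partial_x m_\xi=m'(\cdot-\xi)$ and $\partial_x^2 m_\xi=m''(\cdot-\xi)$ lie in $L^2(D)$ by the exponential tail bounds (\ref{eqn_tail}). On $A$ the denominator does not vanish: choosing $\beta$ small, Cauchy--Schwarz gives $\norm{\partial_x m_\xi}_{L^2}^2-\langle v,\partial_x^2 m_\xi\rangle_{L^2}\ge\norm{m'}_{L^2(\bR)}^2-\beta\,\norm{m''}_{L^2(\bR)}>0$. Hence the implicit function theorem applies and yields $Df(m_\xi+v)[w]=-\partial_h G[w]/\partial_\xi G$, matching the stated expression up to an overall sign (only its magnitude enters part ii)).

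For ii), I would exploit that the embedding $\iota:\bR^N\cong\cH^{L,a}\to m+L^2(D)$ is affine: its constant part is the fixed $\pm1$-extension, and its linear part sends the standard basis vector $e_z$ to the piecewise linear nodal (``tent'') function $\phi_z$ associated with the grid node $z\in D_{L,a}$. By the chain rule the partial derivative of $\tilde f=f\circ\iota$ in the coordinate $h_z$ is $\partial_{h_z}\tilde f=Df[\phi_z]$, whence
$$\norm{\nabla\tilde f}_{\bR^N}^2=\sum_{z\in D_{L,a}}\big(Df[\phi_z]\big)^2=\frac{\sum_z\langle\phi_z,\partial_x m_\xi\rangle_{L^2}^2}{\big(\norm{\partial_x m_\xi}_{L^2}^2-\langle v,\partial_x^2 m_\xi\rangle_{L^2}\big)^2}.$$
It then remains to bound the numerator by $2^{2(d+1)}a^{d+1}\norm{\partial_x m_\xi}_{L^2}^2$, which upon taking square roots reproduces (\ref{eqn_gradestimate}).

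The core estimate, and the step I expect to require the most care, is the bound on $\sum_z\langle\phi_z,\partial_x m_\xi\rangle^2$. Writing $g=\partial_x m_\xi$ and applying Cauchy--Schwarz on the support of each tent function gives $\langle\phi_z,g\rangle^2\le\norm{\phi_z}_{L^2}^2\int_{\mathrm{supp}\,\phi_z}g^2$. Two geometric facts about the grid then close the argument: each $\phi_z$ is supported on the at most $2^{d+1}$ cubes of sidelength $a$ meeting at $z$ and satisfies $0\le\phi_z\le1$, so $\norm{\phi_z}_{L^2}^2\le 2^{d+1}a^{d+1}$; and dually each cube carries $2^{d+1}$ vertices, so $\sum_z\mathbf 1_{\mathrm{supp}\,\phi_z}\le 2^{d+1}$ pointwise. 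Summing over $z$ and using the overlap bound yields $\sum_z\langle\phi_z,g\rangle^2\le 2^{d+1}a^{d+1}\sum_z\int_{\mathrm{supp}\,\phi_z}g^2\le 2^{2(d+1)}a^{d+1}\norm{g}_{L^2}^2$, as required. The only subtlety is the bookkeeping of which nodes contribute near the lateral faces and the $\pm1$-extension region, where $\phi_z$ vanishes so the counting only improves.
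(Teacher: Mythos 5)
Your proof is correct, and it is essentially the argument the paper itself invokes: the paper's proof of this lemma is just a citation to Lemmas 4.2--4.3 of \cite{We10}, whose method is exactly your implicit-function-theorem computation for $Df$ (differentiating the constraint $\langle h-m_\xi,\partial_x m_\xi\rangle_{L^2}=0$) combined with the chain rule through the affine embedding and the tent-function overlap count giving the factor $2^{d+1}a^{(d+1)/2}$. One remark: your sign is in fact the right one --- testing with $h_t=m_{\xi_0+t}$, for which $f(h_t)=\xi_0+t$ while $\frac{d}{dt}h_t\big\vert_{t=0}=-\partial_x m_{\xi_0}$, gives $Df(m_{\xi_0})[\partial_x m_{\xi_0}]=-1$, whereas the formula as printed in the lemma yields $+1$; as you correctly note, this discrepancy is immaterial, since only $\abs{Df}$ enters the estimate (\ref{eqn_gradestimate}) and its subsequent use in the coarea formula.
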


\begin{proof}
See Lemma 4.2 and Lemma 4.3 of \cite{We10} whose proofs adapt immediately to our higher-dimensional setting.
\end{proof}

We can now give an asymptotic lower bound on the normalization constant $Z^\eps$ defined in (\ref{eqn_zeps}).

\begin{proposition}\label{prop_boundnormalizationconstant}
 Assume $L(\eps)\sim\eps^{-\lambda}$ and $a(\eps)\sim\eps^{\alpha}$ as $\eps\to 0$ where $\alpha,\lambda>0$ and $\lambda+(d+1)\alpha<1$.
 Then $N=\dim\cH^{L,a}\sim\eps^{-\lambda-(d+1)\alpha}$ and we have the following bound~:
\begin{equation*}\label{eqn_boundnormalizationconstant}
 \liminf\limits_{\eps\to 0}\eps\log\left(Z^\eps\right)~\geq~-C_*\ .
\end{equation*}
\end{proposition}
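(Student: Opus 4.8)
The plan is to bound from below the integral appearing in (\ref{eqn_zeps}). Writing $I^\eps:=\int_{\cH^{L,a}}\exp(-\cF(h)/\eps)\,\cL^N(dh)$, that identity gives $\eps\log Z^\eps=-C_*+\eps\log(I^\eps/Z_1^\eps)$, so it suffices to prove $\liminf_{\eps\to0}\eps\log(I^\eps/Z_1^\eps)\geq0$. First I would discard everything outside a tubular neighbourhood $A=\{h:\distt{h}<\beta\}$ of $M$ and apply the coarea formula of Lemma \ref{lem_coarea} to the Fermi coordinate $\tilde f$, obtaining
$$I^\eps~\geq~\int_{-L'}^{L'}d\xi\int_{\tilde f^{-1}(\xi)}\frac{\exp(-\cF(h)/\eps)}{\norm{\nabla\tilde f}_{\bR^N}}\,d\mathcal{H}^{N-1}(h)\ ,$$
where $\xi$ ranges over the admissible interval with $L'=\eps^{-\lambda}-\eps^{-\lambda_1}-1$ from Lemma \ref{lem_discbo1}. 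On each slice I restrict to the region $S_\xi$ where $\normt{v}\leq\delta_3$ and $\norm{v}_{L^\infty(D)}\leq1$, so that Proposition \ref{prop_energylandscape} i) yields $\cF(h)\leq c_3\normh{v}^2$, while Lemma \ref{lem_gradientfermicoodinate}, together with the fact that $\norm{\partial_x m_\xi}_{L^2}$ is a positive constant independent of $\xi$, gives $\norm{\nabla\tilde f}_{\bR^N}^{-1}\geq c\,a^{-(d+1)/2}$ for $v$ small.

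The weight $\exp(-c_3\normh{v}^2/\eps)$ is, up to the choice $\kappa=2c_3$, exactly the density of the massive Gaussian free field $\rho^\eps_\kappa$. To make this identification clean I would replace the smooth minimizer $m_\xi$ by its discretization $m_\xi^\eps\in\cH^{L,a}$, so that $v=h-m_\xi^\eps$ genuinely lies in $\cH^{L,a}_0$; the resulting error in the energy bound is controlled by the $H^1$-estimate of Lemma \ref{lem_discbo1} and is $o(1)$. The transverse slice $\{v:\langle v,\partial_x m_\xi\rangle_{L^2}=0\}$ is a hyperplane through the centre $v=0$ of this Gaussian, so its integral equals the full $N$-dimensional normalization $Z_3^{\eps,\kappa}$ divided by a one-dimensional Gaussian factor of order $\eps^{1/2}$, and the restriction to $S_\xi$ costs only a factor $\rho^\eps_\kappa(S_\xi)$. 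Integrating the resulting constant lower bound over $\xi\in[-L',L']$ then produces $I^\eps/Z_1^\eps\geq C\,a^{-(d+1)/2}\eps^{-1/2}L'\,(Z_3^{\eps,\kappa}/Z_1^\eps)\,\inf_\xi\rho^\eps_\kappa(S_\xi)$.

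It remains to take $\eps\log$ and check that every factor except possibly $Z_3^{\eps,\kappa}/Z_1^\eps$ is harmless. One has $\eps\log(a^{-(d+1)/2}\eps^{-1/2}L')=O(\eps\log\eps)\to0$, and by the concentration Lemma \ref{lem_concentrationMDGFF} — bounding $\norm{v}_{L^\infty(D)}$ through part i) and $\normt{v}\leq\norm{v}_{H^1}$ through part ii), using that $\sqrt{\eps N/\kappa}\to0$ — one gets $\rho^\eps_\kappa(S_\xi)\to1$ uniformly in $\xi$, hence $\eps\log\rho^\eps_\kappa(S_\xi)\to0$. Finally Lemma \ref{lem_normalizationbounds} ii) gives
$$\eps\log\frac{Z_3^{\eps,\kappa}}{Z_1^\eps}~\geq~\frac{1}{L}-\frac{\eps N}{2}\log\kappa-\frac{\eps N}{2}\log(1+CL^2)\ ,$$
and here $1/L=\eps^\lambda\to0$, while since $N\sim\eps^{-\lambda-(d+1)\alpha}$ the hypothesis $\lambda+(d+1)\alpha<1$ forces $\eps N\to0$; as $\log(1+CL^2)=O(\log(1/\eps))$ grows only logarithmically, both entropy terms vanish. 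Collecting these estimates yields $\liminf_{\eps\to0}\eps\log(I^\eps/Z_1^\eps)\geq0$, which is the claim.

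The hard part is precisely this last balance between the Gaussian entropy of the $N$-dimensional reference measure and the decreasing temperature: the determinant ratio $\det(\Lambda+I)/\det\Lambda$ contributes the factor $(1+CL^2)^{N/2}$, whose logarithm must be dominated by $\eps^{-1}$. This is exactly where the growth restriction $\lambda+(d+1)\alpha<1$, equivalently $\eps N\to0$, enters, and it is the step separating the admissible regime of joint domain growth and mesh refinement from the inadmissible one. The remaining work — verifying the discretization errors and the concentration estimates are genuinely subexponential — is routine once this scaling is in place.
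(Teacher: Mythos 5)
Your proposal follows the same architecture as the paper's proof: restriction to a tubular neighbourhood, coarea formula along the Fermi coordinate, the energy-landscape bound of Proposition \ref{prop_energylandscape} i), identification of the Gaussian weight with $\rho^\eps_\kappa$, concentration via Lemma \ref{lem_concentrationMDGFF}, and the normalization comparison of Lemma \ref{lem_normalizationbounds} ii); your closing scaling analysis ($\eps N\to 0$, $\eps N\log(1/\eps)\to 0$, discretization errors subexponential) is also exactly the paper's. The genuine gap is in the step you present as a clean identification: the passage from the $(N-1)$-dimensional slice integral to the full Gaussian normalization $Z_3^{\eps,\kappa}$. After you replace $m_\xi$ by its discretization $m_\xi^\eps$ so that $v=h-m_\xi^\eps$ lies in $\cH^{L,a}_0$, the fibers produced by the coarea formula are \emph{not} hyperplanes through the centre of the Gaussian. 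The fiber is $\{h\,:\,\langle h-m_\xi,\partial_x m_\xi\rangle_{L^2}=0\}$, which in the variable $v$ reads $\langle v,\partial_x m_\xi\rangle_{L^2}=\langle m_\xi-m_\xi^\eps,\partial_x m_\xi\rangle_{L^2}=:b$, and $b\neq 0$ in general. You cannot have both: either the Gaussian is centred at $m_\xi^\eps$ and the constraint is affine with offset $b$, or the constraint is homogeneous but $v\notin\cH^{L,a}_0$. This off-centre shift is precisely what the paper's Lemma \ref{lem_euclidianintegration} is built to handle: the slice integral is bounded below by the constrained full-dimensional integral times $\langle\Sigma n,\Sigma n\rangle^{1/2}\exp(-l^2)/(2\delta)$, where $l$ is the $\Sigma$-distance of the affine slice from the origin, and Lemma \ref{lem_ballintegrationconstants} (iii) gives $l^2\leq C\eps^{4\alpha-\lambda_1-1}$, which is subexponential only because of the choice $\lambda_1<2\alpha$ in (\ref{eqn_gamma1}). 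None of this appears in your proposal.

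A second, related defect: your claim that restricting to $S_\xi$ ``costs only a factor $\rho^\eps_\kappa(S_\xi)$'' conflates the conditional measure on the slice with the full $N$-dimensional measure; the concentration estimates of Lemma \ref{lem_concentrationMDGFF} apply to the latter, not the former. The paper resolves this by letting Lemma \ref{lem_euclidianintegration} compare the constrained slice integral (carrying both the $H^1$-ball and the $L^\infty$-ball constraints) directly with a constrained full-space integral over shrunken radii ($\delta^2-l^2$ and $1-2\delta$), to which the full-space concentration then applies. That comparison requires verifying $\norm{n}_\infty\leq 1$ for the $\Sigma$-unit normal of the slice --- Lemma \ref{lem_ballintegrationconstants} (i) --- and this is exactly where the hypothesis $\lambda+(d+1)\alpha<1$ (through $\alpha<\tfrac{1}{d+1}$) enters a second time; your proposal never addresses how the $L^\infty$ cut-off interacts with the slicing. (Minor: the ``one-dimensional Gaussian factor'' is of order $\langle\Sigma n,\Sigma n\rangle^{-1/2}\sim\eps^{(1-(d+1)\alpha)/2}$, not $\eps^{1/2}$; this is harmless since it is polynomial.) So the step you dismiss as routine --- comparing the slice to the full space in the presence of the discretization offset and the $L^\infty$ constraint --- is where the bulk of the paper's technical work (Lemmas \ref{lem_euclidianintegration} and \ref{lem_ballintegrationconstants}) lies, and as written your treatment of it is incorrect, though repairable along the paper's lines.
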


\begin{proof}
Recall that $Z^\eps$ is given by (\ref{eqn_zeps}). 
To find a lower bound on it is sufficient to restrict the integration to a tubular neighborhood of $M$. Let us set $I_\varepsilon:=[-\varepsilon^{-\lambda}+\varepsilon^{-\lambda_1}, \varepsilon^{-\lambda}-\varepsilon^{-\lambda_1}]$ and 
$$ A_\xi~:=~ \left\{h \in \cH^{L,a}\colon h=m_\xi +v \colon   \langle v, \partial_x m_\xi \rangle_{L^2} = 0\ ,\ \| v \|_{H^1}<\delta,\norm{v}_\infty < 1 \right\}\ ,$$
for some $\delta<\delta_3$ is to be determined later. Then we consider a tubular neighborhood of $M$ defined by
$$ A~:=~\bigcup\limits_{\xi\in I_\eps} A_\xi\ .$$
Using the estimate on the energy landscape Proposition \ref{prop_energylandscape} i) we have for $h\in A$ with $h=m_\xi+v$:
$$\exp \Bigl(-\frac{1}{\varepsilon} \cF(h)\Bigl) ~\geq~ \exp \left(-\frac{c_3}{\varepsilon}  \|v\|_{H^1}^2  \right)\ .$$
Note that $v$ is not a piecewise linear function but a general function in $L^2(D)$. We can approximate $v$ by a function $v^\eps\in\cH^{L,a}_0$ by setting $v^\eps(x,y)=h(x,y)-m_\xi^\eps(x,y)$ for $(x,y)\in D_{L,a}$. Using the error bound from Lemma \ref{lem_discbo1} we get
$$\norm{v^\eps-v}_{H^1}~=~\norm{m_\xi^\eps-m_\xi}_{H^1}~\leq~C \varepsilon^{-\lambda_1/2} \varepsilon^{\alpha}\ .$$
Putting this together we get:
\begin{equation}\label{eqn_bound2}
 Z^\eps~\geq~\exp\Bigl(-\frac{C_*}{\varepsilon}\Bigr)\exp\big(-C\eps^{-\lambda_1+2\alpha-1}\big)\frac{1}{Z^{\varepsilon}_1}\int_{A} \exp \left(-\frac{2 c_3}{\varepsilon}  \|v^{\varepsilon} \|_{H^1}^2 \right)   \mathcal{L}^{N}(\mathrm{d}h)\ .
\end{equation}
Using the coarea formula to evaluate the integral over the set $A$ we obtain:
$$\int_{A} \exp \left(-\frac{2 c_3}{\varepsilon}  \|v^{\varepsilon} \|_{H^1}^2   \right) \mathcal{L}^{N}(\mathrm{d}h) = \int_{I_\varepsilon} \mathrm{d}\xi \int_{A_\xi} \frac{1}{|\tilde{\nabla}\tilde{ f}|}\exp \Bigl(-\frac{2c_3}{\varepsilon} \|v^{\varepsilon}\|_{H^1}^2 \Bigl) \mathcal{H}^{N-1}(\mathrm{d}h)\ ,$$
where $\mathcal{H}^{N-1}$ is the codimension $1$ Hausdorff measure on $\cH^{L,a}$. Using equation (\ref{eqn_gradestimate}) and choosing a smaller $\delta$ if necessary we can estimate the gradient uniformly on $A$ by:
$$\frac{1}{|\tilde{\nabla}\tilde{ f}|}~\geq~C\eps^{-\frac{(d+1)\alpha}{2}}\ .$$
Hence we obtain that
\begin{equation}\label{eqn_bound3}
\begin{split}
 \int_{A} &\exp \left(-\frac{2 c_3}{\varepsilon}  \|v^{\varepsilon} \|_{H^1}^2  \right) \mathcal{L}^{N}(\mathrm{d}h)\\
~&\geq~C \eps^{-\lambda-\frac{(d+1)\alpha}{2}} \int_{A_\xi} \exp \left(-\frac{2 c_3}{\varepsilon} \|v^{\varepsilon} \|_{H^1}^2\right)  \mathcal{H}^{N-1}(\mathrm{d}h)\ .
\end{split}
\end{equation}
Let us focus on the last integral over the set $A_\xi$. By a linear change of coordinates one can write 
\begin{equation}\label{eqn_bound4}
 \int_{A_\xi} \exp \left(-\frac{2 c_3}{\varepsilon} \|v^{\varepsilon} \|_{H^1}^2\right)  \mathcal{H}^{N-1}(\mathrm{d}h)~=~\int_{B_\xi} \exp \left(-\frac{2 c_3}{\varepsilon}  \|v \|_{H^1}^2\right)  \mathcal{H}^{N-1}(\mathrm{d}v),
\end{equation}
where $B_\xi= \left\{v \in \cH^{L,a}_0 \colon\ \langle v, \partial_x m_\xi \rangle_{L^2} = \langle m_\xi-m_\xi^{\varepsilon}, \partial_x m_\xi  \rangle_{L^2},\  \| v \|_{H^1}\leq\delta,\norm{v}_\infty \leq 1\right\}$ .
In order to conclude, we need the following lemma:

\begin{lemma} \label{lem_euclidianintegration}
Let $E$ be a finite-dimensional Euclidean space with Lebesgue measure $\mathcal{L}$ and codimension $1$ Hausdorff measure $\mathcal{H}$. Let $a^*= \langle a, \cdot \rangle \in E^*$ be a linear form and  $x \mapsto \langle x, \Sigma x \rangle$ be a symmetric, positive bilinear form. Furthermore, write for $ b \in \bR$ and $\delta,\rho >0$
\begin{equation*}
 \begin{split}
   \tilde{B}^{b,\delta^2,\rho}~&=~ \left\{ x \in E \colon a^* (x)=b \text{ and } \langle x, \Sigma x \rangle \leq \delta^2\ ,\ \norm{x}_\infty<\rho \right\}\ , \\
   \tilde{B}^{\delta^2,\rho}~&=~ \left\{ x \in E \colon \langle x, \Sigma x \rangle \leq \delta^2\ ,\ \norm{x}_\infty<\rho \right\}\ . 
 \end{split}
\end{equation*}

Furthermore, set $l^2= \inf_{x \in \tilde{B}^{b,\infty,\infty}} \langle x, \Sigma x  \rangle$ and let $n$ be a $\Sigma$-unit normal vector on $\tilde{B}^{0,\infty,\infty}$, i.e. $\langle n, \Sigma x \rangle =0 \,$ for all $x \in \tilde{B}^{0,\infty,\infty}$ and $\langle n ,\Sigma n \rangle =1$. We assume that $\norm{n}_\infty\leq 1$. Then one has for every $b$:
\begin{equation*}\label{eqn_linalg}
 \int\limits_{\tilde{B}^{\delta^2-l^2,1-2\delta}}\exp \left( - \langle x,\Sigma x \rangle \right) \mathcal{L}(\mathrm{d}x)~\leq~\dfrac{2 \delta\exp(l^2)}{\langle \Sigma n, \Sigma n \rangle^{\frac{1}{2}}} \int\limits_{\tilde{B}^{b,\delta^2,1}} \exp \left(- \langle x,\Sigma x \rangle \right) \mathcal{H}(\mathrm{d}x)\ .
\end{equation*}
Furthermore, one has the following expressions for $l^2$: 
\begin{equation*}\label{eqn_varpri1}
 l^2=\frac{b^2}{\langle a, \Sigma^{-1} a \rangle }, \qquad  \langle a, \Sigma^{-1} a \rangle= \Big( \sup_{\eta \colon \langle \eta ,\Sigma \eta \rangle =1} a^* (\eta)  \Big)^2\ .
\end{equation*}
The vector $n$ is given as $\pm \frac{\Sigma^{-1}a}{\sqrt{\langle a, \Sigma^{-1} a \rangle } }$ and consequently:
\begin{equation*}\label{eqn_varpri4}
  \langle \Sigma n, \Sigma n \rangle =   \frac{1}{ \langle a, \Sigma^{-1} a \rangle } \left( \sup_{\eta \colon \langle  \eta , \eta \rangle =1} a^* (\eta) \right)^2\ .
\end{equation*}
\end{lemma}
\begin{proof}
Using the coarea formula one can write:
\begin{equation}\label{Linalg3}
 \begin{split}
&\int\limits_{\tilde{B}^{\delta^2-l^2,1-2\delta}} \exp \left( -\langle x,\Sigma x \rangle \right) \mathcal{L}(\mathrm{d}x) \\
&\leq~  \int\limits_{-\delta}^{\delta} \int\limits_{ \tilde{B}^{0,\delta^2-l^2,1-\delta}}  \exp \left(- \langle (y+\lambda n),\Sigma (y+\lambda n) \rangle \right) \dfrac{1}{\langle \Sigma n, \Sigma n \rangle^{\frac{1}{2}}} \mathcal{H}(\mathrm{d}y) \mathrm{d}\lambda\\
&\leq~ \dfrac{2 \delta}{\langle \Sigma n, \Sigma n \rangle^{\frac{1}{2}}}\int\limits_{ \tilde{B}^{0,\delta^2-l^2,1-\delta}}  \exp \left(- \langle y,\Sigma y \rangle \right) \mathcal{H}(\mathrm{d}y)\\
&=~ \dfrac{2 \delta\exp(l^2)}{\langle \Sigma n, \Sigma n \rangle^{\frac{1}{2}}}  \int\limits_{ \tilde{B}^{0,\delta^2-l^2,1-\delta}} \exp \left(- \langle (y +l n),\Sigma (y + l n) \rangle \right) \mathcal{H}(\mathrm{d}y)\\
&\leq~ \dfrac{2 \delta\exp(l^2)}{\langle \Sigma n, \Sigma n \rangle^{\frac{1}{2}}}  \int\limits_{ \tilde{B}^{b,\delta^2,1}} \exp \left(- \langle y ,\Sigma y  \rangle \right) \mathcal{H}(\mathrm{d}y).
\end{split}
\end{equation}
The other assertions are elementary.
\end{proof}

We will apply this lemma in the situation were $E=\cH^{L,a}$, $a^*(v)=\langle v,\partial_x m_\xi\rangle_{L^2}$, $b=\langle m_\xi-m_\xi^\eps,\partial_x m_\xi\rangle_{L^2}$ and $\langle v,\Sigma v\rangle = \frac{2c_3}{\eps}\norm{v}^2_{H^1}$. In this case we must verify that $\norm{n}_\infty\leq1$ and we can estimate the constants appearing in Lemma \ref{lem_euclidianintegration} as follows.
\begin{lemma}\label{lem_ballintegrationconstants}
One has for $\varepsilon$ small enough:
\begin{enumerate}
 \item[(i)] $\norm{n}_\infty \leq 1$ ,
 \item[(ii)] $\langle m_\xi-m_\xi^{\varepsilon}, \partial_x m_\xi  \rangle_{L^2} \leq C \varepsilon^{-\lambda_1/2}\eps^{2\alpha}$ ,
 \item[(iii)] $l^2 \leq C \varepsilon^{4\alpha-\lambda_1-1}$ ,
 \item[(iv)] $\langle \Sigma n, \Sigma n \rangle  \geq C \varepsilon^{-1+(d+1)\alpha}$ .
\end{enumerate}
\end{lemma}

\begin{proof}
 The necessary calculations for (ii)-(iv) can be found in Lemma 4.6 of \cite{We10} and are easily adapted to our setting, hence we only show (i). Note that
\begin{equation*}
 \norm{n}_\infty~\leq~\langle n,n\rangle~=~\frac{\langle\Sigma^{-1}a,\Sigma^{-1}a\rangle}{\langle a,\Sigma^{-1}a\rangle}~\leq~\left(\inf\limits_{u\neq0}\frac{\langle u,\Sigma u\rangle}{\langle u, u\rangle}\right)^{-2}\frac{\langle a, a\rangle}{\langle a,\Sigma^{-1} a\rangle}\ .
\end{equation*}
Using equation (\ref{eqn_concentration2}) we see that the first factor is bounded above by $C\eps^2a(\eps)^{-(d+1)}$, where $a(\eps)$ is the mesh size of the grid. A similar calculation as in the above reference shows that $\langle a,\Sigma^{-1} a\rangle\geq C\eps$. Finally we note that
\begin{equation*}
 \langle a, a\rangle~=~\sup\limits_{\langle u, u\rangle=1}a^*(u)~=~\sup\limits_{\langle u, u\rangle=1}\int_{D_L}\partial_x m_\xi u\,dz~\leq~\int_\bR\partial_x m_\xi dx~=~2~.
\end{equation*}
Hence we obtain $\norm{n}_\infty\leq C \eps a(\eps)^{-(d+1)}=C\eps^{1-(d+1)\alpha}<1$ for $\eps$ small enough, since $\lambda+(d+1)\alpha<1$ by assumption and so $\alpha<\frac{1}{d+1}$.
\end{proof}

Applying the last two lemmata to equation (\ref{eqn_bound4}) we obtain:
\begin{equation}\label{eqn_bound5}
\begin{split}
 &\int\limits_{B_\xi} \exp \left(-\frac{2 c_3}{\varepsilon}  \|v \|_{H^1}^2\right)  \mathcal{H}^{N-1}(\mathrm{d}v)\\~
&\geq~\frac{C}{\delta}\eps^{\frac{-1+(d+1)\alpha}{2}}\exp(-C\eps^{4\alpha-\lambda_1-1})\int\limits_{B} \exp \left(-\frac{2 c_3}{\varepsilon}  \|v \|_{H^1}^2\right)  \mathcal{L}^{N}(\mathrm{d}v)
\end{split}
\end{equation}
where $B= \left\{ v \in \cH_0^{L,a} \colon \frac{2c_3}{\varepsilon} \| v \|_{H^1}^2 \leq \frac{2c_3}{\varepsilon} \delta^2 -l^2, \norm{v}_\infty \leq 1-2\delta \right\}$.
Note that by Lemma \ref{lem_ballintegrationconstants} ii) and (\ref{eqn_gamma1}) we know that $l^2\eps/2c_3\to0$ as $\eps\to0$. Hence the last integral over the set $B$ can be bounded from below by
\begin{equation*}
 Z^{\varepsilon,\kappa}_3 \rho^{\varepsilon}_\kappa \left( \| v \|_{H^1}^2 \leq \frac{\delta^2}{2}, \norm{v}_\infty<1-2\delta  \right)\ ,
\end{equation*}
where $\kappa=4c_3$. The concentration property of the measure $\rho^{\varepsilon}_\kappa$ from Lemma \ref{lem_concentrationMDGFF} shows that this probability can be bounded below by $\frac{1}{2}$ for $\eps$ small enough. Summarizing the estimates (\ref{eqn_bound2}), (\ref{eqn_bound3}), (\ref{eqn_bound4}) and (\ref{eqn_bound5}) we have obtained the following bound:
\begin{equation*}
 Z^\eps~\geq~\exp\Bigl(-\frac{C_*}{\varepsilon}\Bigr)\exp\big(-C\eps^{-\lambda_1+2\alpha-1}\big)\exp(-C\eps^{4\alpha-\lambda_1-1})\eps^{-\lambda-\frac{1}{2}}\frac{Z^{\varepsilon,\kappa}_3}{Z^{\varepsilon}_1}\ .
\end{equation*}
Finally invoking Lemma \ref{lem_normalizationbounds} ii) and assumption (\ref{eqn_gamma1}) finishes the proof.
\end{proof}

We are now in a position to finish the proof of our main theorem. 

\begin{proof}[Proof of Theorem \ref{thm_mainthm}]
We will first show that for $\delta\leq\delta_0$ there holds
\begin{equation}\label{eqn_Zmubound}
 \limsup\limits_{\eps\searrow 0} \eps \log\left(Z^\eps \mu^\eps\big(\distt{h}\geq\delta\big)\right)~\leq~-C_* - c_0\delta^2\ .
\end{equation}
Combining this with the asymptotic of $Z^\eps$ from Proposition \ref{prop_boundnormalizationconstant} then immediately yields the theorem.

So let us denote $A^\delta:=\{h\in\cH^{L,a}\ :\ \distt{h}\geq\delta\}$. From Proposition \ref{prop_energylandscape} ii) we know that 
$$\cF(h)-c_0\delta^2~\geq~0\quad \mbox{on the set } A^\delta\ .$$ 
Hence we can estimate
\begin{align*}\label{eqn_Zmucalculation}
 Z^\eps \mu^\eps(A^\delta)~&=~\exp\left(-\frac{C_*}{\eps}\right)\frac{1}{Z^\eps_1}\int\limits_{A^\delta}\exp\left(-\frac{1}{\eps}\cF(h)\right)\cL^N(dh)\\
			   &\leq~\exp\left(-\frac{1}{\eps}(C_*+c_0\delta^2)\right)\frac{1}{Z^\eps_1}\int\limits_{A^\delta}\exp\left(-\frac{1}{\eps}(\cF(h)-c_0\delta^2)\right)\cL^N(dh)\\
			   &\leq~\exp\left(-\frac{1}{\eps}(C_*+c_0\delta^2)\right)\frac{1}{Z^\eps_1}\int\limits_{A^\delta}\exp\left(-(\cF(h)-c_0\delta^2)\right)\cL^N(dh)\\
			   &\leq~\exp\left(-\frac{1}{\eps}(C_*+c_0\delta^2)\right)e^{-c_0\delta^2}\frac{1}{Z^\eps_1}\int\limits_{A^\delta}
\exp\left(-\frac{1}{2}\norm{\nabla h}^2_{L^2}\right)\cL^N(dh)\\
			   &\leq~\exp\left(-\frac{1}{\eps}(C_*+c_0\delta^2)\right)e^{-c_0\delta^2}\frac{Z^\eps_2}{Z^\eps_1}\ .
\end{align*}
Note that by Lemma \ref{lem_normalizationbounds} $Z_2^\eps/Z^\eps_1\leq\eps^{-N/2}$. Recalling that $N(\eps)\sim \eps^{-\lambda-(d+1)\alpha}$ and the assumption 
$\lambda+(d+1)\alpha<1$ we conclude the estimate (\ref{eqn_Zmubound}) which finishes the proof.
\end{proof}

\bibliographystyle{plain}
\nocite{*}
\bibliography{literatur}

{\sc Matthias Erbar, Universit\"at Bonn, Institue For Applied Mathematics, Edenicher Allee 60, 53115 Bonn, Germany}\\
{\em E-mail address:} \texttt{erbar@iam.uni-bonn.de}

\end{document}